\theoremstyle{definition}
\newtheorem{defn}{Definition}[section]
\newtheorem{example}[defn]{Example}
\newtheorem{setting}[defn]{Setting}
\theoremstyle{plain}
\newtheorem{thm}[defn]{Theorem}
\newtheorem{prop}[defn]{Proposition}
\newtheorem{lem}[defn]{Lemma}
\newtheorem{cor}[defn]{Corollary}
\newtheorem{conj}[defn]{Conjecture}
\theoremstyle{remark}
\newtheorem{rem}[defn]{Remark}
\newcommand{\perf}{{\mathrm{D}_{\mathrm{pf}}}}
\newcommand{\qcoh}{{\mathrm{D}_{\mathrm{qcoh}}}}
\newcommand{\Idem}{\mathrm{Idem}}
\newcommand{\Spec}{\mathop{\mathrm{Spec}}}
\newcommand{\Th}{\mathrm{Th}}
\newcommand{\tSpec}{\mathrm{Spec}_{\triangle}}
\newcommand{\mo}{\mathrm}
\newcommand{\ol}{\overline}
\newcommand{\Supp}{\mathrm{Supp}}
\newcommand{\sO}{\mathscr{O}}
\newcommand{\Hom}{\mathrm{Hom}}
\newcommand{\rd}{\mathbb{R}}
\newcommand{\sNat}{\mathcal{N}\hspace{-0.08cm}\mathit{at}}
\newcommand{\Id}{\mathrm{Id}}
\newcommand{\ang}[1]{{\left\langle{#1}\right\rangle}}
\newcommand{\FM}{\mathrm{FM}}
\newcommand{\pFM}{\mathrm{pFM}}
\newcommand{\Ser}{\mathrm{Ser}}
\title{Proper Fourier-Mukai partners of abelian varieties and points outside the Fourier-Mukai loci in Matsui spectra}
\author{Hisato Matsukawa\thanks{Department of Mathematics, Graduate School of Science, Hokkaido University\\Kita 10, Nishi 8, Kita-Ku, Sapporo, Hokkaido, 060-0810, Japan\\Email:matsukawa.hisato.f4@elms.hokudai.ac.jp,hmnr0211@gmail.com\\
Keywords: Matsui spectrum, Derived category, Fourier-Mukai partner, Abelian variety}}
\date{}
\begin{document}
\maketitle

\begin{abstract}
We prove that any proper Fourier-Mukai partner of an abelian variety is again an abelian variety, by analyzing the Matsui spectrum of the derived category. 
This result was previously obtained by Huybrechts and Nieper-Wisskirchen in the case of smooth projective varieties. 
Our proof, however, extends the result to proper schemes using entirely different techniques.
More generally, we show that any scheme of finite type that is derived equivalent to an open subscheme of an abelian variety is itself an open subscheme of an abelian variety. 

We also study the structure of the Matsui spectrum outside the Fourier-Mukai locus.
For certain proper schemes, we show that the set of points lying outside the Fourier-Mukai locus in the Matsui spectrum has cardinality at least equal to that of the base field. 
This suggests the existence of additional geometric structures, such as moduli spaces, beyond the derived-equivalent part. 
As an application, we provide new counterexamples to conjectures of Ito, which predicted that the Serre invariant locus coincides with the Fourier-Mukai locus. 
While counterexamples involving K3 surfaces of Picard number one were previously given by Hirano-Ouchi, our examples arising from simple abelian varieties of dimension greater than two are the first of their kind.
\end{abstract}

\section{Introduction}
In 2008, Huybrechts and Nieper-Wisskirchen proved that any smooth projective Fourier-Mukai partner of an abelian variety is itself an abelian variety \cite{HN}.
Fourier-Mukai partners of abelian varieties have been extensively studied by many researchers, including Mukai \cite{Mu} and Orlov \cite{Or}.
While smoothness and properness are preserved under derived equivalence, projectivity is not.

In this paper, we show that any of finite type scheme derived equivalent to an open subscheme of an abelian variety is itself an open subscheme of an abelian variety.
In particular, by taking the whole abelian variety, this extends the result of Huybrechts and Nieper-Wisskirchen to proper Fourier-Mukai partners.
Our proof is entirely different from theirs: it is based on the Matsui spectrum of the derived category of the abelian variety.
We also rely on the classification of point objects by de Jong and Olsson \cite{JO}, and on the computation of the Fourier-Mukai locus by Ito and Matsui \cite{IM}.
Our proof is completely different from theirs: we make use of the Matsui spectrum of the derived category of the abelian variety.
We also rely on the classification of point objects by de Jong and Olsson \cite{JO}, and are inspired by the computation of the Fourier-Mukai locus by Ito and Matsui \cite{IM}.

Recently, Matsui defined the locally ringed space \(\tSpec T\) associated to a triangulated category \(T\), called the Matsui spectrum, in \cite{Ma1, Ma2}.
For a scheme \(X\), the Matsui spectrum \(\tSpec \perf(X)\) of its derived category of perfect complexes contains rich information about \(\perf(X)\), such as its derived autoequivalences.
However, the entire topological space is difficult to compute; the only known examples are the projective line and elliptic curves.

If \(Y\) is derived equivalent to \(X\), then there exists a natural open immersion \(Y \to \tSpec \perf(X)\).
The union of these immersions forms the geometric locus of the Matsui spectrum.
When \(X\) is a smooth proper variety, this geometric locus coincides with the (proper) Fourier-Mukai locus, as defined by Ito \cite{It}.
Unlike the whole space, the Fourier-Mukai locus is relatively more accessible to explicit computation.
It has been studied in various cases by Ito \cite{It, It2}, Ito-Matsui \cite{IM}, and Hirano-Ouchi \cite{HO, HO2}.

Although the structure outside the Fourier-Mukai locus remains elusive, it is believed to encode deep information about \(\perf(X)\), such as the classification of thick subcategories, semiorthogonal decompositions, exceptional objects, and some moduli spaces that are not derived equivalent to \(X\).

We prove that, in many cases, the set of points outside the Fourier-Mukai locus has cardinality at least \(\max\{\#k, \aleph_0\}\).
These cases include smooth proper varieties of dimension at least two with ample or anti-ample canonical bundle, simple abelian varieties of dimension at least two, K3 surfaces with Picard number one, and toric varieties of dimension at least two.
This suggests the existence of nontrivial geometric structures beyond the Fourier-Mukai locus.

Ito conjectured that the (proper) Fourier-Mukai locus coincides with the Serre invariant locus, that is, the set of points invariant under the action of the Serre functor \cite{It}.
This conjecture fails for K3 surfaces with Picard number one, as shown by Hirano-Ouchi \cite{HO2}, but no other counterexamples had been known.
Our result provides new counterexamples: simple abelian varieties of dimension at least two.

\begin{thm}[Derived equivalent schemes of open subschemes of abelian varieties]\label{thm:main_open_subschemes_of_abelian_varieties}
Let \(A\) be an abelian variety over an algebraically closed field \(k\) of characteristic zero, and let \(U \subset A\) be an open subscheme.
Let \(X\) be a scheme of finite type over \(k\) such that \(\dim X \le \dim A\) and \(\perf(U) \cong \perf(X)\) as \(k\)-linear triangulated categories.  
Then \(X\) is an open subscheme of an abelian variety \(B\) such that \(\perf(A) \cong \perf(B)\).
\end{thm}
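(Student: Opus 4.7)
The plan is to work inside the common Matsui spectrum of $\perf(U) \cong \perf(X)$ and to identify $X$ with an open subscheme of some Fourier--Mukai partner of $A$. Since $X$ is of finite type over $k$, Matsui's general construction yields a natural open immersion $X \hookrightarrow \tSpec\perf(X) \cong \tSpec\perf(U)$, while $U$ itself sits as an open subscheme of the same locally ringed space. Thus $X$ and $U$ are realized inside a single ambient geometry, and the task reduces to describing which open subschemes can appear there.

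I would first handle the proper case $U = A$, where the two cited results plug in cleanly. De Jong--Olsson classify point objects of $\perf(A)$: each such object is, up to shift and twist, the skyscraper sheaf of a closed point on some Fourier--Mukai partner of $A$, and every such partner is itself an abelian variety by Mukai and Orlov. Ito--Matsui then describe the Fourier--Mukai locus of $\tSpec\perf(A)$ as a suitably glued union of these abelian varieties. The closed points of $X$ give rise to honest point objects in $\perf(A)$, so the image of the open immersion $X \hookrightarrow \tSpec\perf(A)$ meets the Fourier--Mukai locus. Connectedness of each irreducible component of $X$, together with the dimension bound $\dim X \leq \dim A$, then forces $X$ to sit inside one partner $B$ as an open subscheme.

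For the general case $U \subsetneq A$, I would propagate this picture through the Verdier quotient $\perf(A) \twoheadrightarrow \perf(U)$. The open immersion $U \hookrightarrow A$ should induce a corresponding open immersion of Matsui spectra, identifying $\tSpec\perf(U)$ with the locus of $\tSpec\perf(A)$ whose associated thick subcategories contain no object supported on the complement $A\setminus U$. Under this identification, the point objects of $\perf(U)$ are exactly the point objects of $\perf(A)$ lying over $U$, and they distribute across the open subschemes of the Fourier--Mukai partners $B$ that correspond to $U$. Running the same argument on $X \hookrightarrow \tSpec\perf(U)$ realizes $X$ as an open subscheme of such a $B$, and the derived equivalence $\perf(A) \cong \perf(B)$ is inherited from the Fourier--Mukai partnership that produced the chosen component.

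The step I expect to be the hardest is the passage from the proper to the open case, together with the assurance that the image of $X$ cannot meet the portion of $\tSpec\perf(U)$ lying outside the Fourier--Mukai locus. The paper itself stresses that this ``outside'' region is still largely mysterious and may well carry genuine non-geometric data, so one must exploit both the dimension hypothesis and the fact that closed points of a finite-type scheme provide a dense supply of honest point objects, in order to confine $X$ to the geometric part. Making the correspondence between open subschemes of $A$ and of its Fourier--Mukai partners sufficiently functorial to upgrade the conclusion from $\perf(U) \cong \perf(V)$ for some $V \subset B$ to the global statement $\perf(A) \cong \perf(B)$ is, I expect, the technical crux.
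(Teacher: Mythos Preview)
Your overall strategy—embed $X$ into $\tSpec\perf(A)$ via $X \hookrightarrow \tSpec\perf(U) \hookrightarrow \tSpec\perf(A)$, show closed points of $X$ land in the Fourier--Mukai locus $\coprod_{B,H} M_{B,H}$, then use openness of that locus and connectedness of $X$—is exactly the paper's. The gap is at the step you gloss over: \emph{how} do closed points of $X$ yield point objects of $\perf(A)$ when $U \subsetneq A$? If $x \in X$ is closed, the equivalence gives $F = \phi(k(x)) \in \perf(U)$, not $\perf(A)$, and de Jong--Olsson applies only on the proper variety $A$. The paper's key technical device is a categorical characterization of properness (Proposition~\ref{prop:object_properness}): an object of $\perf(Y)$ has finite-dimensional $\Hom^*$ to and from everything if and only if its support is a proper $k$-scheme. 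Since $k(x)$ is proper in $\perf(X)$ and this notion is preserved by equivalences, $F$ has proper support in $U$; hence $\overline{F} = \mathbf{R}i_*F$ is genuinely perfect on $A$ with $\mathrm{Ext}^i(\overline{F},\overline{F}) \cong \mathrm{Ext}^i(k(x),k(x))$, and only now does the hypothesis $\dim X \le \dim A$ enter to match the de Jong--Olsson point-object axioms. Your sentence ``point objects of $\perf(U)$ are exactly the point objects of $\perf(A)$ lying over $U$'' hides precisely this issue, and without the properness lemma there is no reason the pushforward should be perfect.

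Two smaller points. The paper first establishes that $X$ is separated, smooth, and connected (via smoothness of $\perf(X)$ and an $\mathrm{HH}^0$ computation); smoothness of $X$ is what guarantees $\mathrm{Ext}^*(k(x),k(x))$ is an exterior algebra of the right size. And your final worry—upgrading from $\perf(U) \cong \perf(V)$ to $\perf(A) \cong \perf(B)$—does not arise: once $X$ lands in some $M_{B,H}$, the equivalence $\perf(A) \cong \perf(M_{B,H})$ comes for free from the construction of the $M_{B,H}$ as moduli of semi-homogeneous sheaves. The paper therefore treats arbitrary $U$ directly, with no separate reduction to the case $U = A$.
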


\begin{cor}[Proper Fourier--Mukai partners of abelian varieties]\label{cor:main_proper_FM_partner_of_abelian_varieties}
Let \(A\) be an abelian variety over an algebraically closed field \(k\) of characteristic zero, and let \(X\) be a scheme of finite type over \(k\) such that \(\perf(A) \cong \perf(X)\) as \(k\)-linear triangulated categories.  
Then \(X\) is an abelian variety.
\end{cor}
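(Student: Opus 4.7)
The plan is to reduce Corollary~\ref{cor:main_proper_FM_partner_of_abelian_varieties} directly to Theorem~\ref{thm:main_open_subschemes_of_abelian_varieties} applied with $U=A$, and then to close up the resulting open immersion by a connectedness argument.

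I first need to verify the dimension hypothesis $\dim X \le \dim A$ of the theorem. Since $\perf(A) \cong \perf(X)$ as $k$-linear triangulated categories and $A$ is smooth and proper, both properness and dimension of the underlying scheme should transfer to $X$: the standard derived invariance of smoothness and properness (for instance via the uniqueness of dg-enhancements of Lunts--Orlov, followed by recovery of smoothness and properness of $X$ from that of the dg-category $\perf(X)$) yields that $X$ is smooth and proper of the same dimension as $A$. In particular $\dim X = \dim A$, so Theorem~\ref{thm:main_open_subschemes_of_abelian_varieties} with $U = A$ applies.

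The theorem then produces an abelian variety $B$ together with an open immersion $j\colon X \hookrightarrow B$ and an equivalence $\perf(A) \cong \perf(B)$. To finish, I would argue $j$ is surjective. Since $X$ is proper over $k$ and $B$ is separated over $k$, the cancellation property for proper morphisms implies $j$ is itself proper; a proper morphism is closed, and $j$ is also an open immersion, so $j(X)$ is a clopen subscheme of $B$. An abelian variety is irreducible, hence connected, and $j(X)$ is nonempty because $\perf(X)\cong\perf(A)$ is nonzero. Therefore $j(X) = B$, and $X \cong B$ is an abelian variety.

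The main obstacle I expect is the first step: transferring properness and the exact dimension from $A$ to $X$ along a bare $k$-linear triangulated equivalence, since these statements are traditionally formulated in the dg-enhanced setting. If the dg-enhancement argument is regarded as too heavy, an alternative is to extract these invariants directly from the Matsui-spectrum machinery underlying Theorem~\ref{thm:main_open_subschemes_of_abelian_varieties}: that proof already constructs a scheme-theoretic comparison between $X$ and the ambient abelian variety, and one should be able to read off $\dim X$ and the properness of $X$ as a byproduct of that construction, rendering the final clopen argument unaffected.
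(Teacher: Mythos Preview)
Your proposal is correct and follows essentially the same route as the paper: establish that $X$ is smooth, proper, and of the same dimension as $A$ (the paper does this via separatedness from the diagonal/Hochschild argument, then \cite[Proposition~3.31]{Or3} for smoothness and properness, and \cite[Theorem~2.3]{Ka} for the dimension), apply Theorem~\ref{thm:main_open_subschemes_of_abelian_varieties} with $U=A$, and finish with the clopen argument inside the connected target abelian variety. The only difference is cosmetic: the paper leaves the final clopen step implicit (writing simply that $X$ is isomorphic to one of the $M_{B,H}$), whereas you spell it out.
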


\begin{cor}[\ref{cor:proper_Fourier_Mukai_locus_of_abelian_varieties}]\label{cor:FM_locus_and_pFM_locus_of_abelian_variety}
The proper Fourier-Mukai locus and the Fourier-Mukai locus of \(\perf(A)\) coincide.
\end{cor}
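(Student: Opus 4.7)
The inclusion (proper Fourier-Mukai locus) $\subseteq$ (Fourier-Mukai locus) is trivial. For the reverse, I would fix a point $p$ in the Fourier-Mukai locus of $\tSpec \perf(A)$; by definition, $p$ lies in the image of the canonical open immersion $X \hookrightarrow \tSpec \perf(A)$ for some scheme $X$ of finite type over $k$ with $\perf(X) \cong \perf(A)$. The plan is to enlarge $X$ to a proper Fourier-Mukai partner whose image still contains $p$, using Theorem \ref{thm:main_open_subschemes_of_abelian_varieties} as a black box.

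To apply the theorem with $U = A$, I need the dimension bound $\dim X \le \dim A$. I expect this to be justified either by citing a preliminary result (the Krull dimension of a Noetherian scheme of finite type is recoverable from the Matsui spectrum and is therefore an invariant of the derived category, giving $\dim X = \dim A$), or by direct inspection in the setting at hand. Granting this, the theorem produces an abelian variety $B$ such that $X$ is an open subscheme of $B$ and $\perf(A) \cong \perf(B)$. In particular, $B$ is a proper Fourier-Mukai partner of $A$.

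The remaining step is to verify that the canonical open immersions $X \hookrightarrow \tSpec \perf(A)$ and $B \hookrightarrow \tSpec \perf(A)$ are compatible with the scheme-theoretic inclusion $X \hookrightarrow B$, so that the image of $X$ is contained in the image of $B$. This amounts to tracing the canonical open immersion through the classification of residue-field point objects (de Jong-Olsson): pushforward along an open immersion sends the residue field at $x \in X$ to the residue field at the same point viewed in $B$, so the diagram of immersions commutes. Consequently $p$ lies in the image of the proper partner $B$, hence in the proper Fourier-Mukai locus. The main technical obstacle I anticipate is precisely this compatibility check, since it is the only step that involves unpacking the construction of the Matsui spectrum rather than invoking the main theorem; but once it is established, the corollary is immediate.
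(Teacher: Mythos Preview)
You have the definitions reversed. In this paper the Fourier--Mukai locus $\tSpec^{\FM}$ is the union over \emph{projective} partners and the proper Fourier--Mukai locus $\tSpec^{\pFM}$ is the union over \emph{proper} partners, so $\tSpec^{\FM}\subset\tSpec^{\pFM}$ is the trivial inclusion, not the other way around. Your ``Fourier--Mukai locus ranging over schemes of finite type'' is neither of these.

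With the correct definitions the non-trivial direction $\tSpec^{\pFM}\subset\tSpec^{\FM}$ is a one-line consequence of Corollary~\ref{cor:main_proper_FM_partner_of_abelian_varieties}: any proper partner $X$ of $A$ is itself an abelian variety, hence projective, so the very same immersion $X\hookrightarrow\tSpec\perf(A)$ already lands in $\tSpec^{\FM}\perf(A)$. There is no need to embed $X$ as an open subscheme of a larger $B$, and the compatibility check you flag as the main obstacle never arises. (The paper actually records the sharper statement that both loci equal $\coprod_{B,H}M_{B,H}$; this is Corollary~\ref{cor:proper_Fourier_Mukai_locus_of_abelian_varieties}, extracted from the proof of Theorem~\ref{thm:main_open_subschemes_of_abelian_varieties}.)

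There is also a gap in your dimension bound. For a proper $X$ one gets $\dim X=\dim A$ from Kawamata's theorem, as in the proof of Corollary~\ref{cor:main_proper_FM_partner_of_abelian_varieties}; but for an arbitrary finite-type $X$ this is precisely the open Conjecture~\ref{conj:Krull_dimension}, so your ``I expect this to be justified'' does not go through as stated.
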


\begin{thm}\label{thm:main_proper_case}
Let \(X\) be a proper scheme of dimension \(d\) over a field \(k\).
Then we have the following inequality:
\[
  \# \left(\tSpec\perf(X) \setminus X\right) \ge 
  \begin{cases}
    \aleph_0 & (d = 1), \\
    \max\{\#k, \aleph_0\} & (d \ge 2).
  \end{cases}
\]
\end{thm}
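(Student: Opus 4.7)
The plan is to construct, for any proper scheme $X$ of dimension $d \ge 1$, a family of prime thick subcategories of $\perf(X)$ of the stated cardinality lying outside the image of the open immersion $X \hookrightarrow \tSpec \perf(X)$. The image of this open immersion consists of the thick tensor primes, namely the prime thick subcategories closed under tensoring with arbitrary perfect complexes, so any prime that fails to be a tensor ideal automatically lies in $\tSpec \perf(X) \setminus X$.

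I would split the argument into two regimes. When $X$ admits a Fourier--Mukai partner $Y$ not isomorphic to $X$, one obtains an open immersion $Y \hookrightarrow \tSpec \perf(X)$ whose image meets that of $X$ only in a proper subset, since the two tensor structures on $\perf(X) \simeq \perf(Y)$ differ. The closed points of $Y$ then contribute $\max\{\#k, \aleph_0\}$ many distinct primes to the complement of $X$ (respectively $\aleph_0$ when $d = 1$), settling cases such as elliptic curves, abelian varieties admitting non-trivial duals or isogenies, K3 surfaces with moduli of stable sheaves, and more generally any $X$ with a non-trivial tilting object.

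In the residual rigid regime---curves of genus $\ge 2$, smooth proper varieties with ample canonical bundle, K3 surfaces of Picard number one, toric varieties of dimension $\ge 2$, and so on---the target primes lie outside the entire Fourier--Mukai locus and must be produced intrinsically. Following the constructions of Matsui for $\mathbb{P}^1$, of Ito and Ito--Matsui for toric and abelian settings, and of Hirano--Ouchi for K3 surfaces, I would associate to each closed point $x \in X$ (or to each $k$-point of a suitable deformation parameter) a distinct non-tensor prime via a Verdier quotient construction: localize $\perf(X)$ at the thick subcategory generated by a suitable perfect complex adapted to $x$, and verify that the resulting kernel is prime and non-tensor. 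Since a proper scheme of dimension $d \ge 1$ has at least $\aleph_0$ many closed points, and one of dimension $d \ge 2$ has at least $\max\{\#k, \aleph_0\}$ many, this yields the required bound.

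The main obstacle is the rigid regime: uniformly producing the stated cardinality of distinct non-geometric primes for every proper scheme in this class. Primality must be verified against Matsui's lattice-theoretic criterion; non-tensor stability must be witnessed by an explicit object moved out of the prime under a suitable tensoring operation; and distinctness must be established from the chosen support data. This step draws on the classification of point objects due to de Jong--Olsson and the fine combinatorics of Matsui's spectrum, and is the technical heart of the proof.
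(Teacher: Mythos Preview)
Your proposal is not a proof but a strategy outline, and you explicitly flag the crucial step---producing, for an \emph{arbitrary} proper scheme in the ``rigid regime,'' the required family of non-tensor primes---as an unresolved obstacle. The case split on Fourier--Mukai partners and the appeal to the Matsui, Ito--Matsui, and Hirano--Ouchi constructions do not cover general proper schemes (which need not be smooth, toric, abelian, or K3), and ``localize at a suitable perfect complex adapted to $x$ and verify primality'' is not a construction. So as written there is a genuine gap.

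More importantly, the whole approach is harder than necessary. The paper never constructs a single prime thick subcategory outside $X$, and never splits on whether $X$ has nontrivial Fourier--Mukai partners. Instead it uses the order-preserving injection $\Supp:\Th(\perf(X))\to\{\text{subsets of }\tSpec\perf(X)\}$ together with the observation that $X\subset\Supp(\sO_X)$. For $d\ge 2$ one well-orders the closed points of $X$ by a set $\Lambda$ of cardinality $\max\{\#k,\aleph_0\}$ and sets $I_\lambda=\langle\sO_X,\perf_{Z_\lambda}(X)\rangle$ where $Z_\lambda$ is the set of closed points of index $<\lambda$. Each $\Supp(I_\lambda)$ contains $X$, so it suffices to show $I_\lambda\subsetneq I_{\lambda+1}$ for every $\lambda$. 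The key technical input is a generation lemma: if $f:C\to X$ is a morphism from a proper curve with non-point image, then for every $F\in\langle\sO_X\rangle$ one has $f^{-1}(\Supp_X F)\in\{C,\emptyset\}$. Since $\dim X\ge 2$, for any finite $Z\subset Z_\lambda$ there is a closed curve through $x_\lambda$ avoiding $Z$, so a perfect complex supported at $x_\lambda$ cannot lie in $\langle\sO_X,\perf_Z(X)\rangle$; as $I_\lambda$ is the filtered union over finite $Z\subset Z_\lambda$, this gives the strict increase. For $d=1$, $X$ is projective with a very ample $L$; by Orlov's generation theorem $\langle L^{\otimes n},L^{\otimes m}\rangle=\perf(X)$ for $n\neq m$, while each $\langle L^{\otimes n}\rangle\neq\perf(X)$ by the same curve lemma, so the $\langle L^{\otimes n}\rangle$ are pairwise distinct thick subcategories whose supports all contain $X$. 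In both cases one obtains a large family of distinct closed subsets of $\tSpec\perf(X)$ all containing $X$, which forces the stated lower bound on $\#(\tSpec\perf(X)\setminus X)$ without ever exhibiting an explicit prime.
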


\begin{thm}[Outside the Fourier-Mukai locus]\label{thm:main_outside_the_Fourier_Mukai_loci}
Let \(X\) be a proper scheme of dimension \(d\) over a field \(k\).
Assume that one of the following conditions holds:
\begin{enumerate}
  \item \(X\) is Gorenstein and its canonical bundle is ample or anti-ample (or more generally, \(\otimes\)-ample; see \cite{It2}).
  \item The characteristic of \(k\) is zero, and \(X\) is a smooth projective toric variety.
  \item The characteristic of \(k\) is zero, \(X\) is an abelian variety, and \(X\) is not isogenous to any product of elliptic curves.
  \item The characteristic of \(k\) is zero, and \(X\) is a K3 surface of Picard number one.
\end{enumerate}
Then the following inequality holds:
\[
  \# \left(\tSpec\perf(X) \setminus \tSpec^{\FM}\perf(X)\right) \ge 
  \begin{cases}
    \aleph_0 & (d = 1), \\
    \max\{\#k, \aleph_0\} & (d \ge 2).
  \end{cases}
\]
\end{thm}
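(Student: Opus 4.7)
The plan is to combine Theorem~\ref{thm:main_proper_case} with a case-by-case classification of the Fourier--Mukai partners of $X$. Since $X\subseteq\tSpec^{\FM}\perf(X)$, we have
\[
\tSpec\perf(X)\setminus\tSpec^{\FM}\perf(X)\;\subseteq\;\tSpec\perf(X)\setminus X,
\]
so Theorem~\ref{thm:main_proper_case} already produces $\max\{\#k,\aleph_0\}$ candidate points in the right-hand side (respectively $\aleph_0$ when $d=1$). The task reduces to arranging, in each of the four situations, that these candidates avoid every non-identity FM partner's embedded image in $\tSpec\perf(X)$.

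For case (1), the Bondal--Orlov reconstruction theorem, extended to proper Gorenstein schemes with $\otimes$-ample canonical bundle in \cite{It2}, forces every FM partner of $X$ to be isomorphic to $X$ and every autoequivalence to factor as automorphism pullback, line-bundle twist, and shift. Each such autoequivalence induces an autohomeomorphism of $\tSpec\perf(X)$ preserving the embedded open subscheme $X$ setwise, so $\tSpec^{\FM}\perf(X)=X$ and the conclusion is immediate from Theorem~\ref{thm:main_proper_case}. For case (4), a K3 surface of Picard number one has a single FM partner up to isomorphism by Oguiso, although spherical twists can move $X$ inside the Matsui spectrum; Hirano--Ouchi's construction of Serre-invariant obstruction points \cite{HO2} exhibits points outside $\tSpec^{\FM}\perf(X)$, and, parametrized in families, combines with Theorem~\ref{thm:main_proper_case} to supply $\max\{\#k,\aleph_0\}$ such points.

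Cases (2) and (3) involve genuinely non-trivial but finite collections of FM partners. For smooth projective toric varieties in characteristic zero, known rigidity results bound the FM partners finitely; for a simple abelian $d$-fold $X$ not isogenous to a product of elliptic curves, Corollary~\ref{cor:main_proper_FM_partner_of_abelian_varieties} together with Orlov's symplectic-lattice classification of derived equivalences between abelian varieties yields only finitely many FM partners, further restricted by simplicity. Hence $\tSpec^{\FM}\perf(X)\setminus X$ is a finite union of open immersions of $d$-dimensional schemes. The points produced by Theorem~\ref{thm:main_proper_case} come from a $k$-parametrized family of thick-subcategory and point-object data — for abelian varieties, via the de Jong--Olsson classification \cite{JO} — and one verifies that such a family cannot be absorbed by the closed-point images of finitely many competing FM partners.

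The hardest step is case (3): even after reducing to finitely many FM partners of a simple abelian variety, autoequivalences act intricately on $\tSpec\perf(X)$, and a priori their embedded images could together absorb an entire $k$-parametrized family of candidate points. The main technical work will therefore be to align the construction underlying Theorem~\ref{thm:main_proper_case} with Orlov's classification so as to exhibit the required $\max\{\#k,\aleph_0\}$ points demonstrably outside every embedded FM partner.
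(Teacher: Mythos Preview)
Your treatment of case~(1) is essentially the paper's: once one knows $\tSpec^{\FM}\perf(X)=X$ (via \cite{It2} or Bondal--Orlov), Theorem~\ref{thm:main_proper_case} finishes the job.

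The remaining cases, however, have a genuine gap. Your strategy is to take the points produced by Theorem~\ref{thm:main_proper_case}, which lie in $\tSpec\perf(X)\setminus X$, and then argue that they also avoid $\tSpec^{\FM}\perf(X)$ because there are only ``finitely many'' Fourier--Mukai partners. This does not work, for two reasons. First, finiteness of isomorphism classes of partners is not the same as smallness of the Fourier--Mukai locus: each embedded partner is an open subscheme of cardinality $\max\{\#k,\aleph_0\}$, and in the abelian case the locus is in fact an \emph{infinite} disjoint union $\coprod_{B,H} M_{B,H}$ (Corollary~\ref{cor:proper_Fourier_Mukai_locus_of_abelian_varieties}), so your premise for case~(3) is simply false. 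Second, and more fundamentally, Theorem~\ref{thm:main_proper_case} gives no control over \emph{where} its points land relative to the other embedded partners; the sentence ``one verifies that such a family cannot be absorbed\ldots'' is exactly the missing argument, and nothing you have written supplies it.

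The paper's approach inverts the logic. Rather than producing points and then checking they avoid the FM locus, it builds a strictly increasing chain of \emph{closed} subsets $\Supp(I_0)\subsetneq\Supp(I_1)\subsetneq\cdots$ indexed by $\Lambda$ with $\#\Lambda=\max\{\#k,\aleph_0\}$, arranged so that the very first term already contains $\tSpec^{\FM}\perf(X)$. Any witness to a strict inclusion $\Supp(I_\lambda)\subsetneq\Supp(I_{\lambda+1})$ then automatically lies outside the FM locus. For cases~(2) and~(4) this reduces to the single inclusion $\tSpec^{\FM}\perf(X)\subset\Supp(\sO_X)$, which is imported from \cite[Lemma~5.24]{It} and \cite[Lemma~5.6]{HO2} respectively; the chain from Lemma~\ref{lem:increasing_sequence} then applies verbatim. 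For case~(3) the inclusion $\tSpec^{\FM}\perf(X)\subset\Supp(\sO_X)$ \emph{fails}, so the $I_\lambda$ must be enlarged: one picks a simple isogeny factor $S$ with $\dim S\ge 2$ (this is where ``not isogenous to a product of elliptic curves'' enters, which is weaker than your ``simple''), indexes by closed points of $S$, and adds to each $I_\lambda$ generators supported on suitable preimages. Strictness of the chain is checked by pulling back to $S$ and invoking Lemma~\ref{lem:increasing_sequence} there; the containment $\tSpec^{\FM}\perf(X)\subset\Supp(I_1)$ is checked point-by-point using the de~Jong--Olsson description of supports of point objects. This last step is the real content, and your proposal does not approach it.
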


Condition (4) strengthens the result of \cite[Theorem 5.8]{HO2}.

If \(d \ge 2\), Theorem \ref{thm:main_outside_the_Fourier_Mukai_loci} suggests the existence of geometric structures lying outside the Fourier-Mukai locus.
For example, if \(X\) is a Severi-Brauer scheme over a smooth proper variety \(Y\) and \(X\) satisfies one of the conditions above, then \(Y\) appears outside the Serre invariant locus as an open subscheme.

If \(d = 0\), there are no points outside \(X\), since \(X\) is affine.
If \(d = 1\), the lower bound \(\aleph_0\) is indeed attained when \(X = \mathbb{P}^1\), as shown in \cite[Example 4.10]{Ma1}.
Hence, \(\aleph_0\) gives the optimal lower bound in this case.
However, equality does not hold in general; for instance, see the case of elliptic curves in \cite{HO}.

We expect that the cardinality on the left-hand side satisfies the inequality
\[
\#\tSpec\perf(X)  \le \max\{\#k, \aleph_0\}
\]
for any scheme \(X\) of finite type.

Ito conjectured a relationship between the (proper) Fourier-Mukai locus and the Serre invariant locus.

\begin{conj}[Ito, {\cite[Conjecture 6.17]{It}}]\label{conj:Ito_proper_case}
Let \(X\) be a smooth projective variety over an algebraically closed field \(k\) of characteristic zero.  
Then the following equality holds:
\[
\tSpec^{\pFM}\perf(X) = \tSpec^{\Ser}\perf(X).
\]
\end{conj}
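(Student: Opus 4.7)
The plan is to attack the two inclusions separately. The easy inclusion \(\tSpec^{\pFM}\perf(X) \subseteq \tSpec^{\Ser}\perf(X)\) should be essentially formal: if \(Y\) is a smooth proper Fourier-Mukai partner of \(X\), the equivalence \(\perf(Y) \simeq \perf(X)\) identifies the intrinsic Serre functor of \(\perf(Y)\) with that of \(\perf(X)\). Via the open immersion \(Y \hookrightarrow \tSpec\perf(X)\), each closed point of \(Y\) is fixed by the Serre functor, since on a smooth proper variety that functor is tensoring with the canonical bundle and shifting, operations that do not move points of the Matsui spectrum. Taking the union over all proper FM partners yields this inclusion.

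For the reverse inclusion, the natural strategy is a reconstruction-of-moduli argument. Given a Serre-invariant point \(\mathfrak{p} \in \tSpec\perf(X)\), one tries to attach to \(\mathfrak{p}\) a ``point-like'' object of \(\perf(X)\), in the sense of Bondal-Orlov and its extension by de Jong-Olsson \cite{JO}: a simple object whose self-\(\Ext\) is concentrated in degrees \([0,d]\) and whose top \(\Ext\) is pinned down by Serre duality. One would then hope to collect these point-like objects into a proper moduli space \(Y\), and produce a universal family on \(X \times Y\) whose Fourier-Mukai transform realizes an equivalence \(\perf(Y) \simeq \perf(X)\). In this strategy, Serre-invariance is supposed to supply exactly the Serre-duality symmetry that forces \(Y\) to be proper.

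The main obstacle, and in fact the step where I expect the plan to fail, is that Serre-invariance alone is too weak to force the full package of numerical and simplicity conditions characterizing point-like objects. Theorem \ref{thm:main_outside_the_Fourier_Mukai_loci}(3) of the present paper provides, for any simple abelian variety \(A\) of dimension at least two, at least \(\#k\) points in \(\tSpec\perf(A)\) lying outside the Fourier-Mukai locus, and by Corollary \ref{cor:FM_locus_and_pFM_locus_of_abelian_variety} these also lie outside the proper FM locus. But on such an \(A\) the canonical bundle is trivial, so the Serre functor is merely the shift \([\dim A]\), which acts trivially on the Matsui spectrum, making \emph{every} point Serre-invariant. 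Hence \(\tSpec^{\Ser}\perf(A) = \tSpec\perf(A) \supsetneq \tSpec^{\pFM}\perf(A)\), so rather than proving Conjecture \ref{conj:Ito_proper_case} my plan instead produces the counterexamples advertised in the abstract. In other words, the honest ``proof proposal'' here is a strategy for disproof: test the conjecture on simple abelian varieties of dimension \(\geq 2\), where Serre-invariance is automatic and the paper's cardinality bound already shows the inclusion is strict.
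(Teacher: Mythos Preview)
Your proposal is correct and lands on exactly the paper's approach: this statement is a conjecture that the paper \emph{disproves}, and your counterexample argument---take a simple abelian variety \(A\) of dimension at least two, note that the trivial canonical bundle makes the Serre functor a pure shift so that \(\tSpec^{\Ser}\perf(A)=\tSpec\perf(A)\), and invoke Theorem~\ref{thm:main_outside_the_Fourier_Mukai_loci}(3) together with Corollary~\ref{cor:FM_locus_and_pFM_locus_of_abelian_variety} to see \(\tSpec^{\pFM}\perf(A)\subsetneq\tSpec\perf(A)\)---is precisely the content of the Corollary following Conjecture~\ref{conj:Ito_projective_case}. The preliminary discussion of the easy inclusion and the abandoned moduli-reconstruction strategy is extraneous but not wrong; the substance is in your final paragraph, and it matches the paper.
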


\begin{conj}[Ito, {\cite[Conjecture 6.14]{It}}]\label{conj:Ito_projective_case}
Under the same assumptions as in Conjecture \ref{conj:Ito_proper_case}, the following equality holds:
\[
\tSpec^{\FM}\perf(X) = \tSpec^{\Ser}\perf(X).
\]
\end{conj}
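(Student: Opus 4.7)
The plan is to approach the conjectured equality by proving the two inclusions separately. For the inclusion \(\tSpec^{\FM}\perf(X)\subseteq\tSpec^{\Ser}\perf(X)\), I would observe that a point of the Fourier-Mukai locus is, by definition, the image of a closed point \(y\in Y\) on some smooth projective Fourier-Mukai partner \(Y\) of \(X\), corresponding to the prime supported by the skyscraper sheaf \(\mathscr{O}_y\); since the Serre functor on \(\perf(Y)\) is \(-\dotimes\omega_Y[\dim Y]\) and \(\omega_Y\) is locally trivial near \(y\), the image of \(\mathscr{O}_y\) under the Serre functor is a shift of \(\mathscr{O}_y\) itself, so the associated prime is fixed by the Serre action on \(\tSpec\perf(X)\). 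This direction should be a formal consequence of the fact that the Serre functor is a canonical invariant of the triangulated category and therefore transports coherently across any derived equivalence \(\perf(X)\cong\perf(Y)\).

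For the reverse inclusion \(\tSpec^{\Ser}\perf(X)\subseteq\tSpec^{\FM}\perf(X)\), the natural strategy is a reconstruction argument. Starting from a Serre-invariant prime \(\mathfrak{p}\), one would try to identify ``point-like'' generators producing \(\mathfrak{p}\), using the Serre-invariance as a defining constraint; here the classification of point objects of de Jong-Olsson \cite{JO} would be the main technical input. The resulting collection of Serre-invariant primes would then be organized into a smooth projective scheme \(Y\) together with a derived equivalence \(\perf(X)\cong\perf(Y)\), along the lines of Bondal-Orlov reconstruction. If this can be achieved, each Serre-invariant prime corresponds to a closed point of such a Fourier-Mukai partner, placing it inside the Fourier-Mukai locus.

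The main obstacle, and ultimately what defeats the strategy, is the triviality of the Serre functor in the Calabi-Yau case. If \(\omega_X\cong\mathscr{O}_X\), then the Serre functor is a shift, so its action on \(\tSpec\perf(X)\) is trivial and every prime is automatically Serre-invariant; in particular \(\tSpec^{\Ser}\perf(X)=\tSpec\perf(X)\). The reverse inclusion would therefore demand that the Fourier-Mukai locus fill the entire Matsui spectrum. For \(X\) a simple abelian variety of dimension at least two over an algebraically closed field of characteristic zero, Corollary \ref{cor:main_proper_FM_partner_of_abelian_varieties} pins down every Fourier-Mukai partner as an abelian variety and thus forces a concrete (and much smaller) description of the Fourier-Mukai locus, while Theorem \ref{thm:main_proper_case} provides at least \(\max\{\#k,\aleph_0\}\) primes of \(\tSpec\perf(X)\) lying strictly outside it. Hence the reverse inclusion cannot hold in this setting: the honest output of the plan is not a proof of the equality but an explicit family of counterexamples, namely simple abelian varieties of dimension \(\ge 2\), complementing the K3 counterexamples of Hirano-Ouchi.
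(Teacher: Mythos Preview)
Your overall reading is exactly right: this is a conjecture, and the paper's contribution is to \emph{disprove} it, not to prove it. Your mechanism---on a Calabi--Yau variety the Serre functor is a shift, so \(\tSpec^{\Ser}\perf(X)=\tSpec\perf(X)\), and hence any prime outside the Fourier--Mukai locus refutes the equality---is precisely the paper's, as is the choice of abelian varieties for the counterexamples.

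The gap is in the final step. Theorem~\ref{thm:main_proper_case} only produces primes outside \(X\), not outside \(\tSpec^{\FM}\perf(X)\). The Fourier--Mukai locus is strictly larger than \(X\): by Corollary~\ref{cor:proper_Fourier_Mukai_locus_of_abelian_varieties} it is the disjoint union \(\coprod_{B,H} M_{B,H}\), a countable union of abelian varieties. A bare cardinality comparison cannot separate the two, since both \(\tSpec\perf(X)\setminus X\) and the Fourier--Mukai locus have size \(\max\{\#k,\aleph_0\}\), and nothing prevents the increasing chain of Lemma~\ref{lem:increasing_sequence} from growing entirely inside the Fourier--Mukai locus. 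What you actually need is Theorem~\ref{thm:main_outside_the_Fourier_Mukai_loci}\,(3), whose proof does the additional work: it shows that the entire Fourier--Mukai locus is already contained in \(\Supp(I_1)\) for a suitably chosen chain \((I_\lambda)\). This step uses the point-object classification of \cite{JO} to locate the support of every \(\phi(k(y))\) on a translate of an abelian subvariety, and then the simplicity of a factor of \(X\) to force that support to meet the closed sets defining \(I_1\). Only then does the strict increase of the chain yield primes genuinely outside the Fourier--Mukai locus. Note also that the paper's hypothesis is slightly broader than ``simple of dimension \(\ge 2\)'': it suffices that \(X\) not be isogenous to a product of elliptic curves.
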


Conjectures \ref{conj:Ito_proper_case} and \ref{conj:Ito_projective_case} hold for smooth projective curves and for proper Gorenstein varieties whose canonical bundle is ample or anti-ample.
Further generalizations can be found in \cite{It2}.  
On the other hand, Hirano-Ouchi proved that K3 surfaces of Picard number one provide counterexamples to both Conjectures \ref{conj:Ito_proper_case} and \ref{conj:Ito_projective_case}.
Our result yields new counterexamples.

\begin{cor}
Conjectures \ref{conj:Ito_proper_case} and \ref{conj:Ito_projective_case} fail for abelian varieties satisfying the assumptions of Theorem \ref{thm:main_outside_the_Fourier_Mukai_loci} (3).
\end{cor}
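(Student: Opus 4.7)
The plan is to combine the three key inputs already established in the paper (or standard for abelian varieties): the triviality of the Serre functor on an abelian variety, the lower bound on points outside the Fourier--Mukai locus from Theorem \ref{thm:main_outside_the_Fourier_Mukai_loci}(3), and the coincidence of the FM and proper FM loci from Corollary \ref{cor:FM_locus_and_pFM_locus_of_abelian_variety}. The argument then reduces to a one-line comparison of subsets of \(\tSpec\perf(X)\).

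First I would observe that the canonical bundle of an abelian variety \(X\) is trivial, so the Serre functor on \(\perf(X)\) is isomorphic to the shift \([\dim X]\). Since \(\tSpec^{\Ser}\perf(X)\) is defined as the set of points of \(\tSpec\perf(X)\) fixed by the action of the Serre functor, and shifts act trivially on the Matsui spectrum, every point of \(\tSpec\perf(X)\) is Serre-invariant. Thus
\[
\tSpec^{\Ser}\perf(X) = \tSpec\perf(X).
\]

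Next, by Theorem \ref{thm:main_outside_the_Fourier_Mukai_loci}(3), under the hypothesis that \(X\) is a simple abelian variety of dimension \(\ge 2\) not isogenous to a product of elliptic curves, the complement
\[
\tSpec\perf(X) \setminus \tSpec^{\FM}\perf(X)
\]
has cardinality at least \(\max\{\#k, \aleph_0\}\); in particular it is nonempty. Combined with the previous identification, this gives the strict inclusion \(\tSpec^{\FM}\perf(X) \subsetneq \tSpec^{\Ser}\perf(X)\), which disproves Conjecture \ref{conj:Ito_projective_case}. For Conjecture \ref{conj:Ito_proper_case}, I would invoke Corollary \ref{cor:FM_locus_and_pFM_locus_of_abelian_variety}, which asserts \(\tSpec^{\pFM}\perf(X) = \tSpec^{\FM}\perf(X)\) for abelian varieties; the same strict inclusion then shows \(\tSpec^{\pFM}\perf(X) \subsetneq \tSpec^{\Ser}\perf(X)\).

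There is no real obstacle here: the substantive content has been packaged into Theorem \ref{thm:main_outside_the_Fourier_Mukai_loci}(3) and Corollary \ref{cor:FM_locus_and_pFM_locus_of_abelian_variety}, and the role of the corollary is simply to record that these two theorems, together with the triviality of the canonical bundle, yield counterexamples to both of Ito's conjectures. The only point worth making explicit in the write-up is the passage from \(\omega_X \cong \sO_X\) to \(\tSpec^{\Ser}\perf(X) = \tSpec\perf(X)\), since this is where the special nature of abelian varieties enters and is what makes the counterexamples so clean.
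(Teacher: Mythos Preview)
Your argument is correct and is exactly the intended one: the paper states this corollary without proof because it follows immediately from Theorem~\ref{thm:main_outside_the_Fourier_Mukai_loci}(3), Corollary~\ref{cor:FM_locus_and_pFM_locus_of_abelian_variety}, and the observation that \(\omega_X\cong\sO_X\) forces \(\tSpec^{\Ser}\perf(X)=\tSpec\perf(X)\). One small correction: the hypothesis of Theorem~\ref{thm:main_outside_the_Fourier_Mukai_loci}(3) is that \(X\) is an abelian variety not isogenous to a product of elliptic curves, not that \(X\) is simple; your argument goes through verbatim under the correct (weaker) hypothesis.
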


\subsection*{Notations and assumptions}
All schemes are assumed to be topologically noetherian.

For a triangulated category \(T\) and a subset \(W \subset T\), we write \(\ang{W}\) for the smallest thick subcategory of \(T\) containing \(W\).  
We say that \(\ang{W}\) is classically generated by \(W\).

For a scheme \(X\), \(\perf(X)\) denotes the derived category of perfect complexes on \(X\), and \(\qcoh(X)\) denotes the derived category of \(\sO_X\)-modules with quasi-coherent cohomology.  
If \(X\) is regular, then \(\perf(X)\) is equivalent to the bounded derived category of coherent sheaves on \(X\).
If \(X\) is separated, then \(\qcoh(X)\) is equivalent to the derived category of quasi-coherent sheaves on \(X\).
If \(X\) is a scheme over a field \(k\), such triangulated categories admit a unique \(k\)-linear dg-enhancement (\cite{CNS}), so we may identify them with \(k\)-linear dg-enhanced triangulated categories when necessary.

\section{Preliminaries}
In this section, we introduce the Matsui spectrum of a triangulated category, as defined in \cite{Ma1}, \cite{Ma2}.  
Our definition of the structure sheaf is slightly different from the original one.  
It is a dg-analogue of the definition given in \cite{Mk}, which allows us to consider non-reduced structure sheaves.  
On the other hand, if \(T = \perf(X)\) for a smooth scheme \(X\), our structure sheaf coincides with Matsui's original definition on the proper Fourier-Mukai locus.

Let \(T\) be a triangulated category equipped with a dg-enhancement.  
We write \(\Th(T)\) for the set of thick subcategories of \(T\).

\begin{defn}[\cite{Ma1}]
  A thick subcategory \(P \subset T\) is called prime if the set
  \[\{ J \in \Th(T) \mid J \supsetneq P \}\]
  has a smallest element, denoted by \(\overline{P}\).
\end{defn}

The set of prime thick subcategories is denoted by \(\tSpec T\).

For a subset \(W \subset T\), its support is defined by
\[\Supp(W) = \{ P \in \tSpec T \mid W \not\subset P \}.\]
The set \(\tSpec T\) is regarded as a topological space with the topology whose family of closed subsets is generated by
\[\{ \Supp(t) \mid t \in T \}.\]

For a subset \(W \subset \tSpec T\), define
\[I_W = \{ a \in T \mid \Supp(a) \cap W = \emptyset \}.\]
This is a thick subcategory of \(T\).  
The idempotent-completion of the Verdier quotient is denoted by
\[D_W = \Idem(D / I_W).\]

Let \(\sO\) be the sheaf on \(\tSpec T\) associated to the presheaf
\[U \mapsto \mo{H}^0 \sNat(\Id_{D_U}, \Id_{D_U}),\]
where \(\sNat\) is the mapping chain complex in the dg-category \(\rd \underline{\Hom}(D_U, D_U)\), see \cite{To} for the definition.  
It is a sheaf of commutative rings.

\begin{defn}[Matsui, \cite{Ma1}, \cite{Ma2}]
  The ringed space
  \[\tSpec T = (\tSpec T, \sO)\]
  is called the Matsui spectrum of \(T\).
\end{defn}
It is actually a locally ringed space.

\begin{thm}\label{thm:foundamel_results_for_Matsui_spectrum}
  Let \(T\) be a triangulated category with a dg-enhancement.
  The followings hold.
  \begin{enumerate}
    \item (\cite[Theorem 2.8]{Mk}) The map
    \[\Supp : \Th(T) \to \{ Z \subset \tSpec T:{\rm subset} \}\]
    is an order-preserving injection.
    \item (\cite[Proposition 2.9]{Ma1}) Let \(I \subset T\) be a thick subcategory.
    Then, there is a canonical immersion 
    \[\tSpec(T/I) \to \tSpec(T).\]
    It is an open immersion if \(I\) is classically finitely generated.
    \item (\cite[Proposition 2.11]{Ma1}) There is a canonical isomorphism 
    \[\tSpec \Idem(T) \cong \tSpec T.\]
    \item Let \(X\) be a scheme.
    Then, there is a canonical open immersion
    \[X \to \tSpec \perf(X).\]
    \item Let \(X, Y\) be schemes with a derived equivalence \(\perf(X) \cong \perf(Y)\).
    Then, there is a canonical open immersion
    \[Y \to \tSpec \perf(X).\]
  \end{enumerate}
\end{thm}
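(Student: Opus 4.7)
Items (1)--(3) come with explicit references, so my plan focuses on parts (4) and (5). For (4), to each point $x \in X$ I would associate the thick subcategory
\[ P_x = \{ F \in \perf(X) \mid x \notin \Supp(F) \} \subset \perf(X). \]
The first task is to verify $P_x$ is prime in Matsui's sense. By Thomason's localization theorem, the idempotent completion of $\perf(X)/P_x$ is equivalent to $\perf(\sO_{X,x})$, whose smallest nonzero thick subcategory is generated by the Koszul complex on a system of parameters (in the regular case, by the residue field $k(x)$). Lifting this generator along the quotient functor identifies the successor $\overline{P_x}$ explicitly and shows $P_x \in \tSpec \perf(X)$.

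Next, I would show the set-theoretic map $\iota\colon X \to \tSpec \perf(X)$, $x \mapsto P_x$, is a homeomorphism onto an open subspace. Continuity follows from the identification $\iota^{-1}(\Supp(F)) = \Supp_X(F)$. To exhibit $\iota(X)$ as open and recover the topology on $X$, I would cover $X$ by affine opens $U_i$; the thick subcategory of $\perf(X)$ consisting of complexes supported on $X \setminus U_i$ is classically finitely generated (since $X$ is topologically noetherian), so part (2) produces open immersions $\tSpec \perf(U_i) \to \tSpec \perf(X)$ compatible with $\iota$. These glue to the open immersion $X \to \tSpec \perf(X)$ on underlying topological spaces.

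The main obstacle is the structure sheaf. One must check that for every open $U \subset \iota(X)$, the canonical comparison
\[ \Gamma(U, \sO_X) \longrightarrow \mo{H}^0 \sNat(\Id_{D_U}, \Id_{D_U}), \]
induced by scalar multiplication on the identity functor, is an isomorphism, where $D_U \simeq \perf(U)$ by the previous step. This is a degree-zero Hochschild-cohomological computation: using the dg-enhancement, $\sNat(\Id_{D_U}, \Id_{D_U})$ computes the derived center of $\perf(U)$ via To\"en's derived Morita theory \cite{To}, whose $\mo{H}^0$ is $\Gamma(U, \sO_X)$. The technical heart is extending the affine case (where it reduces to the statement that the degree-zero Hochschild cohomology of a commutative ring is the ring itself) to general $U$ by sheafification on both sides.

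Part (5) then follows formally from (4): a derived equivalence $\Phi\colon \perf(Y) \simeq \perf(X)$ of dg-enhanced triangulated categories induces an isomorphism $\tSpec \perf(Y) \cong \tSpec \perf(X)$ of locally ringed spaces, since every ingredient in the construction of the Matsui spectrum depends only on the dg-enhanced triangulated structure. Composing with the open immersion $Y \to \tSpec \perf(Y)$ from (4) yields the desired canonical open immersion $Y \to \tSpec \perf(X)$.
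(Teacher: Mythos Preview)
Your proposal is correct and, in fact, supplies more detail than the paper itself, which dispatches (4) and (5) in a single sentence by citing the proof of \cite[Theorem~3.5]{Mk} together with To\"en's work \cite{To}. Your reconstruction---associating to $x$ the thick subcategory $P_x$ of complexes not supported at $x$, identifying $\Idem(\perf(X)/P_x)\simeq\perf(\sO_{X,x})$ via Thomason localization to verify primality, using classically finitely generated complements of affine opens with part~(2) to obtain the open immersion on underlying spaces, and invoking To\"en's derived Morita theory for the degree-zero Hochschild computation on the structure sheaf---is exactly the content one would expect to find in the cited reference \cite{Mk}, and your appeal to \cite{To} matches the paper's. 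Likewise, your derivation of (5) from (4) via the functoriality of $\tSpec$ under dg-equivalences is the intended argument. There is no substantive divergence in approach; you have simply unpacked what the paper leaves to the literature.
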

\begin{proof}
  Properties (4) and (5) follow from the proof of \cite[Theorem 3.5]{Mk} and To{\"e}n's work (\cite{To}).
\end{proof}

\begin{defn}[Fourier-Mukai loci, proper Fourier-Mukai loci, and Serre invariant loci, \cite{It}, \cite{HO}]
  Let \(T\) be a proper \(k\)-linear triangulated category.
  \begin{enumerate}
    \item The proper Fourier-Mukai locus of \(T\) is defined by
    \[\tSpec^{\pFM} T = \bigcup_{X,\ \perf(X) \cong T } X\subset\tSpec\perf(X),\]
    where the union runs over all proper \(k\)-schemes \(X\) and equivalences \(\perf(X) \cong T\).
    Such information defines an open immersion \(X \to \tSpec T\) by Theorem \ref{thm:foundamel_results_for_Matsui_spectrum} (5).
    \item The Fourier-Mukai locus \(\tSpec^{\FM} T\) is defined similarly, but the union runs over all projective \(k\)-schemes \(X\) and equivalences \(\perf(X)\cong T\).
    \item If \(T\) admits a Serre functor \(S_T\), it naturally acts on \(\tSpec T\).
    The subspace of fixed points is called the Serre invariant locus and is denoted by \(\tSpec^{\Ser} T\).
  \end{enumerate}
\end{defn}

If there is a smooth proper scheme \(X\) such that \(T \cong \perf(X)\), there are inclusions
\[
  \tSpec^\FM T \subset \tSpec^\pFM T \subset \tSpec^\Ser T \subset \tSpec T.
\]
In this case, \(\tSpec^\FM T\) and \(\tSpec^\pFM T\) are smooth open subschemes of \(\tSpec T\).

\begin{example}
  \begin{enumerate}
    \item (Proposition \ref{prop:semiorthogonal_decomposition}) Let \(T\) be a classically finitely generated triangulated category (such as \(\perf(X)\); see \cite[Corollary 3.1.2]{BB}) with a semiorthogonal decomposition \((T_1, T_2)\).
    Then, there is an open immersion
    \[\tSpec T_1 \coprod \tSpec T_2 \to \tSpec T.\]

    \item (Corollary \ref{cor:exceptional_object}) Let \(T\) be a finitely generated triangulated category and \(E \in T\) an exceptional object.
    Then, \(E\) defines an open and closed point in \(\tSpec T\).

    \item (\cite{Ma2}) 
    \begin{eqnarray*}
      \tSpec \perf(\mathbb{P}^1_k) &=& \mathbb{P}^1_k \coprod \left(\coprod_{\mathbb{Z}} \Spec k\right),\\ \quad
      \tSpec^{\FM} \perf(\mathbb{P}^1_k) &=& \tSpec^{\pFM} \perf(\mathbb{P}^1_k) = \tSpec^{\Ser} \perf(\mathbb{P}^1_k) = \mathbb{P}^1_k.
    \end{eqnarray*}
    In this case, the Fourier-Mukai locus, proper Fourier-Mukai locus, and Serre invariant locus coincide.
    The points outside the Fourier-Mukai locus correspond to the exceptional objects \(\mathcal{O}(n)\) for \(n \in \mathbb{Z}\).
    Those \(\coprod_{\mathbb{Z}} \Spec k\) can be thought of as the Picard group \(\mathbb{Z}\) of \(\mathbb{P}^1\).

    \item (\cite{HO}, \cite{Ma2}) Let \(E\) be an elliptic curve over an algebraically closed field \(k\) of characteristic \(0\).
    Then we have
    \begin{eqnarray*}
      \tSpec \perf(E) &=& \tSpec^{\FM} \perf(E)  \tSpec^{\pFM} \perf(E)\\ &=& \tSpec^{\Ser} \perf(E) = E \coprod \left( \coprod_{(r,d) \in I} M_{r,d} \right),
    \end{eqnarray*}
    where \(
      I = \{ (r,d) \in \mathbb{Z}^2 \mid r \geq 1, \gcd(r,d) = 1 \}\).
    For each \((r,d) \in I\), the corresponding \(M_{r,d}\) is the moduli space of stable vector bundles of rank \(r\) and degree \(d\), which is (non-canonically) isomorphic to \(E\).

    \item (\cite{Mk}) Let \(X\) be a Severi-Brauer scheme of relative dimension \(r\) over a noetherian scheme \(Y\).
    Then, there is an open immersion
    \[
      X \coprod \left( \coprod_{\mathbb{Z}} Y \right) \to \tSpec \perf(X).
    \]
    For each \(n \in \mathbb{Z}\), the \(n\)-th copy of \(Y\) corresponds to the semiorthogonal decomposition
    \[\left(\perf(X,Y)_n, \ang{ \bigcup_{m=n+1}^{n+r} \perf(X,Y)_m }\right)\] described in \cite{Be}.
    Those \(\coprod_{\mathbb{Z}} Y\) can also be thought of as the relative Picard group over \(Y\), and they are not contained in the Serre invariant locus.
  \end{enumerate}
\end{example}

\section{Semiorthogonal decompositions, proper objects, prime objects, and exceptional objects}

\begin{prop}\label{prop:semiorthogonal_decomposition}
  Let \(D\) be a classically finitely generated triangulated category, and \((D_1, D_2)\) a semiorthogonal decomposition of \(D\).
  Then, there is a canonical open immersion
  \[\tSpec D_1 \coprod \tSpec D_2 \to \tSpec D.\]
\end{prop}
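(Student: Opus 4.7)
The plan is to apply Theorem \ref{thm:foundamel_results_for_Matsui_spectrum}(2) to the thick subcategories $D_1$ and $D_2$ of $D$ simultaneously, via the standard identification $D/D_j \simeq D_i$ for $\{i,j\} = \{1,2\}$ coming from the semiorthogonal decomposition, and then to verify that the two resulting open subsets of $\tSpec D$ are disjoint, so that they glue to a canonical open immersion from the coproduct.

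First I would verify that $D_1$ and $D_2$ are themselves classically finitely generated, as this is the hypothesis needed in Theorem \ref{thm:foundamel_results_for_Matsui_spectrum}(2). A semiorthogonal decomposition provides exact projection functors $\pi_i \colon D \to D_i$ that restrict to the identity on $D_i$. If $G$ is a classical generator of $D$, then for any $X \in D_i$ we have $X = \pi_i(X) \in \pi_i(\ang{G}) \subseteq \ang{\pi_i(G)}$, so $\pi_i(G)$ classically generates $D_i$.

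Using the defining triangles $X_j \to X \to X_i$ (with $X_\ell \in D_\ell$) of the semiorthogonal decomposition, the composite $D_i \hookrightarrow D \to D/D_j$ is an equivalence with quasi-inverse induced by $\pi_i$, and this equivalence lifts to the dg-enhancements. Applying Theorem \ref{thm:foundamel_results_for_Matsui_spectrum}(2) therefore yields canonical open immersions of locally ringed spaces
$$\tSpec D_i \cong \tSpec(D/D_j) \hookrightarrow \tSpec D.$$
A direct inspection of the construction shows that the image of $\tSpec(D/I) \to \tSpec D$ consists precisely of those primes $P \in \tSpec D$ containing $I$: the map sends a prime $Q$ of $D/I$ to its preimage in $D$, and conversely any prime $P \supseteq I$ descends to a prime $P/I$ of $D/I$, since the poset of strict thick supersets of $P$ in $D$ is canonically identified with that of $P/I$ in $D/I$.

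Finally, the two images are disjoint: if a prime $P \in \tSpec D$ contained both $D_1$ and $D_2$, then it would contain the thick subcategory $\ang{D_1 \cup D_2}$, which equals $D$ by the definition of semiorthogonal decomposition, contradicting the fact that primes are proper. The two open immersions thus combine into the desired open immersion $\tSpec D_1 \coprod \tSpec D_2 \hookrightarrow \tSpec D$. I expect the main technical point to be the dg-enhanced upgrade of the equivalence $D_i \simeq D/D_j$, which is required in order for Theorem \ref{thm:foundamel_results_for_Matsui_spectrum}(2) to produce an open immersion of locally ringed spaces rather than only of topological spaces; this is however a standard feature of semiorthogonal decompositions at the dg level.
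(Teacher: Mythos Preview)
Your proof is correct and follows essentially the same route as the paper: both arguments use the equivalence $D_i \simeq D/D_j$, show that $D_j$ is classically finitely generated by applying the projection functor to a generator of $D$ (the paper phrases this via the triangle $G \to F \to L(F)$, which yields the same object), invoke Theorem~\ref{thm:foundamel_results_for_Matsui_spectrum}(2) to obtain the open immersions, and conclude disjointness from the observation that a prime containing both $D_1$ and $D_2$ would equal $D$. Your explicit description of the image of $\tSpec(D/I)\to\tSpec D$ and your remark on dg-enhancements are additional clarifications, but the overall strategy is the same.
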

\begin{proof}
Let \(L : D \to D_1\) be the composition of the localization functor \(D \to D/D_2\) and its right adjoint \(D/D_2 \cong D_1\).
The functor \(L\) induces an immersion \(\iota_1 : \tSpec D_1 \to \tSpec D\) by \cite{Ma1}.

Let \(F \in D\) be a classical generator of \(D\), and consider an exact triangle \(G \to F \to L(F)\) in \(D\).
By definition, \(G \in D_2\), and \(\ang{G} = D_2\).
This implies that \(\iota_1\) is an open immersion.
Dually, there is an open immersion \(\iota_2 : \tSpec D_2 \to \tSpec D\).

We need to show that the images of \(\iota_1\) and \(\iota_2\) are disjoint.
Assume there exists a point \(x \in \mathrm{Im}(\iota_1) \cap \mathrm{Im}(\iota_2)\).
Let \(P_x\) be the corresponding prime thick subcategory.
By definition, \(P_x\) contains both \(D_1\) and \(D_2\), thus it contains \(D = \ang{D_1, D_2}\).
Since \(P_x\) is a prime thick subcategory, \(P_x \neq D\) must hold.
This is a contradiction.
\end{proof}

\begin{defn}
Let \(k\) be a field, and \(D\) a \(k\)-linear triangulated category.
We say an object \(a \in D\) is left proper if for any \(b \in D\), \(\dim_k \Hom^*(b,a) < \infty\).
Dually, \(a\) is right proper if it is left proper in the opposite category \(D^{op}\).
If \(a\) is both left proper and right proper, we say \(a\) is proper.
\end{defn}
If all objects of \(D\) are proper, then \(D\) is said to be proper.

\begin{prop}\label{prop:object_properness}
Let \(X\) be a separated scheme of finite type over \(k\), and let \(F \in \perf(X)\).
Then the following conditions are equivalent:
\begin{enumerate}
  \item \(F\) is left proper.
  \item \(F\) is right proper.
  \item \(\Supp_X(F)\) is a proper \(k\)-scheme.
\end{enumerate}
\end{prop}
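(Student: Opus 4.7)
The plan is to treat $(3) \Rightarrow (1),(2)$ by a direct cohomological finiteness argument, and the converses $(1) \Rightarrow (3)$ and $(2) \Rightarrow (3)$ by contrapositive, producing an explicit witness perfect complex when $\Supp_X(F)$ is not proper.

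For $(3) \Rightarrow (1),(2)$: given $G \in \perf(X)$, the derived sheaf hom $\rd\sHom(G, F) \simeq G^\vee \dotimes F$ is perfect with support contained in $\Supp_X(G) \cap \Supp_X(F) \subseteq \Supp_X(F)$. Since $\Supp_X(F)$ is proper over $k$, this complex has proper support, so its hypercohomology $\rd\Gamma(X, \rd\sHom(G, F)) \simeq \Hom^*_X(G, F)$ is finite-dimensional by the standard finiteness theorem for bounded coherent complexes on a proper $k$-scheme. Applied to $\rd\sHom(F, G) \simeq F^\vee \dotimes G$, whose support is likewise contained in $\Supp_X(F)$, the same argument yields right-properness.

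For the converse, I argue the contrapositive and suppose $Z := \Supp_X(F)$ is not proper over $k$. Then some irreducible component $Z_0$ of $Z$ is positive-dimensional and not proper; let $\eta$ denote its generic point, and pick an affine open $U = \Spec A \subseteq X$ containing $\eta$ but disjoint from the other irreducible components of $Z$, so that $U \cap Z = U \cap Z_0$ is positive-dimensional. The cohomology modules of the perfect complex $F|_U$ are finitely generated $A$-modules with joint support $U \cap Z_0$; as any finitely generated $A$-module with positive-dimensional support is infinite-dimensional over $k$, both $H^*(U, F|_U) = \Hom^*_U(\sO_U, F|_U)$ and $H^*(U, (F|_U)^\vee) = \Hom^*_U(F|_U, \sO_U)$ are infinite-dimensional.

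The main obstacle is promoting this local infinite-dimensionality to a global witness $G \in \perf(X)$ with $\dim_k \Hom^*_X(G, F) = \infty$ (and symmetrically $\dim_k \Hom^*_X(F, G) = \infty$). By Thomason--Trobaugh, $\sO_U$ is a direct summand of $G|_U$ for some $G \in \perf(X)$, which makes the infinite-dimensional local Hom appear as a summand of $\Hom^*_U(G|_U, F|_U)$ and of $\Hom^*_U(F|_U, G|_U)$. To bridge from Homs on $U$ to Homs on $X$, I would apply $\Hom^*_X(G, -)$ to the localization triangle $\rd\Gamma_{Z'}(F) \to F \to \rd j_* j^* F$ with $j: U \hookrightarrow X$ and $Z' := X \setminus U$, using $j^* \dashv \rd j_*$ to identify the third term with $\Hom^*_U(G|_U, F|_U)$. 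The delicate point is that $\rd\Gamma_{Z'}(F)$ is not perfect, so controlling $\Hom^*_X(G, \rd\Gamma_{Z'}(F))$ takes care; I would shrink $U$ so that $Z \cap Z'$ has strictly smaller dimension than $Z_0$ and run an induction on $\dim Z$, reducing the boundary contribution to a lower-dimensional support case already handled and concluding that $\Hom^*_X(G, F)$ must be infinite-dimensional.
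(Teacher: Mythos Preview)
Your argument for $(3)\Rightarrow(1),(2)$ is fine and is essentially what the paper does (it just cites \cite[Proposition~3.26]{Or3}).

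The converse has a genuine gap. In the localization triangle $\rd\Gamma_{Z'}(F)\to F\to \rd j_*j^*F$ the term $\rd\Gamma_{Z'}(F)$ is not perfect, so an inductive hypothesis phrased for perfect complexes does not apply to it; and more seriously, local cohomology along $Z'$ typically has non-coherent, infinite-dimensional cohomology sheaves even when $Z\cap Z'$ is zero-dimensional or proper. Thus $\Hom^*_X(G,\rd\Gamma_{Z'}(F))$ can be infinite-dimensional for reasons having nothing to do with the non-properness of $\Supp_X(F)$, and then the long exact sequence gives no information about $\Hom^*_X(G,F)$. Already for $X=\mathbb P^1\times\mathbb A^1$, $F=\sO_X$, $U=\mathbb A^1\times\mathbb A^1$, $G=\sO_X$, both outer terms are infinite-dimensional. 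There is also a set-up problem with the induction: you need $Z_0$ to be a non-proper component, but you also need all other components of $Z$ to end up in $Z\cap Z'$ with strictly smaller dimension, and nothing rules out $Z$ being, say, a proper surface union a non-proper curve.

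The paper sidesteps this by passing to a \emph{closed} subvariety rather than an open one. From non-properness of $\Supp_X(F)$ it extracts an affine closed curve $C\subset\Supp_X(F)$ (via \cite[Proposition~3.30]{Or3}), then uses Thomason's support lemma \cite[Lemma~3.4]{Th} to produce perfect $G,H$ with $\Supp_X(G)=\Supp_X(H)=C$. Replacing $F$ by $\rd\underline{\Hom}(G,F)$ (resp.\ $F\otimes G$ for right-properness) reduces to $\Supp_X(F)=C$, and then a spectral sequence together with an elementary computation on one-dimensional affine schemes (Lemma~\ref{lem:finiteness_of_Hom}) exhibits an infinite-dimensional $\Hom^*(H,F)$. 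The point is that perfect complexes supported on a closed subset remain perfect, so no boundary term ever appears.
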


\begin{proof}
It is well known that (3) implies (1) and (2); see, for example, the proof of \cite[Proposition 3.26]{Or3}.
We show that (1) implies (3).
Assume, for contradiction, that \(\Supp_X(F)\) is not proper.
By the proof of \cite[Proposition 3.30]{Or3}, there exists an affine closed curve \(C \subset \Supp_X(F)\).
By \cite[Lemma 3.4]{Th}, we may take \(G,H \in \perf(X)\) such that \(\Supp_X(G) = \Supp_X(H) = C\).
By adjunction, we may replace \(F\) by \(\mathbf{R}\underline{\Hom}(G,F)\) and assume \(\Supp_X(F) = C\).
There is a spectral sequence 
\[
E_2^{p,q} = \bigoplus_{i-j=q} \Hom^p(\mathcal{H}^j(H), \mathcal{H}^i(F)) \Rightarrow \Hom^{p+q}(H,F).
\]
Let \(i_0, j_0\) be the minimal and maximal integers such that \(\Supp_X(\mathcal{H}^i(F)) = C\) and \(\Supp_X(\mathcal{H}^j(H)) = C\), respectively.
By Lemma \ref{lem:finiteness_of_Hom}, \(\dim_k E_2^{p,q} < \infty\) if \(q < i_0 - j_0\), equals zero if \(p < 0\) or \(q \ll 0\), and \(\dim_k E_2^{0,i_0 - j_0} = \infty\).
This implies \(\dim_k \Hom^{i_0 - j_0}(H,F) = \infty\), a contradiction.

To prove that (2) implies (3), we apply a similar argument by replacing \(F\) with \(F \otimes G\).
The rest of the proof proceeds analogously.
\end{proof}

\begin{lem}\label{lem:finiteness_of_Hom}
  Let \(X = \Spec R\) be an irreducible affine scheme of dimension 1 of finite type over \(k\), and let \(M, N\) be finitely generated \(R\)-modules.
  Then, \(\dim_k \Hom^0(M, N) = \infty\) if \(\Supp_X(M) = \Supp_X(N) = X\).
  Otherwise, \(\dim_k \mo{Ext}^i(M, N) < \infty\) for all \(i\).
\end{lem}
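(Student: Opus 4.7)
My plan is to split on whether both supports equal $X$.

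\emph{Full support case.} Since $R$ is Noetherian and $M, N$ are finitely generated, $\Hom_R(M, N)$ is a finitely generated $R$-module. I would use flatness of localization at the generic point $K = \mathrm{Frac}(R)$ to identify $\Hom_R(M, N) \otimes_R K \cong \Hom_K(M_K, N_K)$. The full-support hypothesis forces $M_K$ and $N_K$ to be nonzero finite-dimensional $K$-vector spaces, so the right-hand side is nonzero. Hence $\Hom_R(M, N)$ has positive generic rank and contains an element $\phi$ whose image in the torsion-free quotient is nonzero; such $\phi$ has zero annihilator, so $R \phi \cong R$. Since $R$ is a one-dimensional integral domain of finite type over $k$, it is not Artinian, whence $\dim_k R = \infty$ and $\dim_k \Hom^0_R(M, N) = \infty$.

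\emph{Non-full support case.} Since $X$ is irreducible of dimension one, any proper closed subset of $X$ is a finite set of closed points, so any finitely generated module whose support is a proper closed subset has finite length over $R$. If $\Supp_X(N) \subsetneq X$, then $N$ has finite length and, since each residue field $R/\mathfrak{m}$ is finite over $k$ by the Nullstellensatz, $N$ is finite-dimensional over $k$. Taking a resolution $F_\bullet \to M$ by finitely generated free $R$-modules (available since $R$ is Noetherian and $M$ finitely generated), each $\Hom_R(F_n, N) \cong N^{r_n}$ is finite-dimensional over $k$, hence so is $\mathrm{Ext}^i_R(M, N)$ for all $i$.

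If instead only $\Supp_X(M) \subsetneq X$, then $M$ has finite length, so some ideal $I = \prod_j \mathfrak{m}_j^{a}$ annihilates $M$, and therefore annihilates $\mathrm{Ext}^i_R(M, N)$. Using again a finitely generated free resolution $F_\bullet \to M$, the terms $\Hom_R(F_n, N) \cong N^{r_n}$ are finitely generated over $R$, so Noetherianness gives that $\mathrm{Ext}^i_R(M, N)$ is itself finitely generated over $R$. Being annihilated by $I$, it is in fact finitely generated over the Artinian $k$-algebra $R/I$, which is finite-dimensional over $k$. Hence $\dim_k \mathrm{Ext}^i_R(M, N) < \infty$.

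The only nontrivial wrinkle is this last step: when $N$ has full support the terms $N^{r_n}$ appearing in the resolution are themselves infinite-dimensional over $k$, so finite-dimensionality of $\mathrm{Ext}^i$ cannot be read off directly from the resolution. The trick is to combine Noetherian finite generation of $\mathrm{Ext}^i$ with the annihilator coming from the finite length of $M$, and pass through the Artinian quotient $R/I$.
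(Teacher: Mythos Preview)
Your argument has a real gap in the full-support case: you assume $R$ is an integral domain (you write ``$K = \mathrm{Frac}(R)$'' and later ``since $R$ is a one-dimensional integral domain''), but the hypothesis says only that $X = \Spec R$ is \emph{irreducible}, i.e.\ that $R$ has a unique minimal prime $\mathfrak{p}$ (the nilradical), which may be nonzero. In that case the localization $K = R_{\mathfrak{p}}$ at the generic point is a local Artinian ring rather than a field, and $M_K, N_K$ are not vector spaces. More seriously, your step ``such $\phi$ has zero annihilator, so $R\phi \cong R$'' can fail outright: take $R = k[x,y]/(y^2, xy)$ (irreducible, one-dimensional, non-reduced) and $M = N = R/(y)$. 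These have full support and $\Hom_R(M,N) \cong R/(y) \cong k[x]$ is infinite-dimensional over $k$, yet every element is annihilated by $y$, so $\Hom_R(M,N)$ contains no copy of $R$.

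The repair is short. Over the Artinian local ring $K$, any two nonzero finitely generated modules admit a nonzero homomorphism (surject the source onto the residue field via Nakayama, and embed the residue field into the socle of the target), so $\Hom_K(M_K,N_K) \neq 0$. Since $\Hom_R(M,N)\otimes_R K \cong \Hom_K(M_K,N_K)$, the finitely generated $R$-module $\Hom_R(M,N)$ has the generic point in its support, hence full support; a finitely generated $R$-module with one-dimensional support cannot have finite length, so $\dim_k \Hom_R(M,N) = \infty$. The paper handles the non-reduced issue differently: it first passes to suitable subquotients of $M$ and $N$ to reduce to the case where $R$ is a domain and both modules are torsion-free, and only then exhibits an explicit embedding $R \hookrightarrow \Hom(M,N)$.

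Your treatment of the non-full-support case is correct but more elaborate than needed. Since $\Supp_X \mo{Ext}^i(M,N) \subset \Supp_X(M) \cap \Supp_X(N)$ (localize), this support is zero-dimensional whenever either factor has proper support; as $\mo{Ext}^i(M,N)$ is finitely generated over $R$, it is then automatically of finite length and hence of finite $k$-dimension. This is the paper's one-line argument, and it avoids your case split on which of $M,N$ has small support.
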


\begin{proof}
  If \(\Supp_X(M) \neq X\) or \(\Supp_X(N) \neq X\), then each \(\mo{Ext}^i(M, N)\) is a finitely generated \(R\)-module whose support has dimension 0.
  Thus, these modules are Artinian and have finite \(k\)-dimension.

  Now assume \(\Supp_X(M) = \Supp_X(N) = X\).
  After replacing \(M, N\) with \(fM, fN\) for a non-zero-divisor \(f \in R\), we may assume that for any non-zero-divisor \(g \in R\), \(gx \neq 0\) for any nonzero \(x \in M\) and \(x \in N\).
  Let \(I \subset R\) be the nilradical.
  Take the minimal \(n\) such that \(I^n N = 0\).
  Replace \(M\) by \(M / I M\), \(N\) by \(I^{n-1} N\), and \(R\) by \(R / I\).
  Then, \(R\) is an integral domain and \(M, N\) are torsion-free \(R\)-modules.
  Take a nonzero homomorphism \(\phi: M \otimes \mathrm{Frac}(R) \to N \otimes \mathrm{Frac}(R)\).
  Choose \(f \in R \setminus \{0\}\) such that \(\phi(f M) \subset N\).
  This induces an injection \(R \to \Hom(M, N),\  g \mapsto f g \phi\), which implies \(\dim_k \Hom(M, N) = \infty\).
\end{proof}

By Proposition \ref{prop:object_properness}, we may recover the well-known fact that \(X\) is proper if and only if \(\perf(X)\) is proper \cite[Proposition 3.30]{Or3}.

From now on, we assume the following setting.

\begin{setting}
  Let \(D\) be a triangulated category, and \(\overline{D}\) a triangulated category admitting all small direct sums and satisfying Brown representability. Suppose that \(D\) is a full triangulated subcategory of \(\overline{D}\).
\end{setting}

For example, if \(D\) is an enhanced triangulated category, then its derived category of dg-modules satisfies this condition.
If \(X\) is a quasi-compact separated scheme, then \(\perf(X)\) is a full subcategory of \(\qcoh(X)\), and this embedding satisfies the condition.

In this setting, for any thick subcategory \(I \subset D\), the localization functor \(D \to \Idem(D/I)\) has a fully faithful right adjoint \(\Idem(D/I) \to \ol{D}\) by Brown's representability theorem.
We will identify \(\Idem(D/I)\) with a full subcategory of \(\overline{D}\) via the adjoint.

\begin{defn}
  In the above setting, let \(p \in \overline{D}\).
  We say \(p\) is a prime object if the following conditions are satisfied.
  \begin{enumerate}
    \item The thick subcategory \(P_p := \{ F \in D \mid \Hom^*(F, p) = 0 \}\) is a prime thick subcategory of \(D\).
    \item \(p\) is contained in \(\Idem(\overline{P_p}/P_p)\).
  \end{enumerate}
  The thick subcategory \(P_p\) is called the prime thick subcategory defined by the prime object \(p\).
\end{defn}

\begin{example}
  \begin{enumerate}
    \item Let \(X\) be a scheme, \(x \in X\) a point, and \(F\) a perfect complex with support \(\overline{\{x\}}\).
    Then, \(F \otimes \sO_{X,x} \in \qcoh(X)\) is a prime object defining the point \(x\).
    Especially, if \(x\) is a closed point, the point \(x\) is defined by the perfect complex \(F\).
    If \(x\) is a regular point, the residue field \(k(x)\) is a prime object defining \(x\).

    \item Let \(E \in D\) be an exceptional object.
    Then, \(E\) is a prime object by \cite{HO}.
  \end{enumerate}
\end{example}

\begin{prop}
  \begin{enumerate}
    \item Let \(P\) be a prime thick subcategory and \(p \in \Idem(\ol{P}/P)\) a non-zero object.
    Then \(p\) is a prime object defining \(P\).
    
    \item For any prime thick subcategory \(P\), there exists a prime object defining \(P\).
    
    \item Let \(p \in \overline{D}\) be a prime object, and let \(q \in \overline{D}\).
    Then \(q\) is a prime object defining \(P_p\) if and only if \(q \in \ang{p}\) and \(q \neq 0\).
    
    \item Let \(P\) be a prime thick subcategory, and \(p, q\in\ol{D}\) prime objects defining \(P\).
    Then \(p \in \Idem(D)\) if and only if \(q \in \Idem(D)\).
  \end{enumerate}
\end{prop}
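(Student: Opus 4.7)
For (1), I would prove $P_p = P$. The inclusion $P \subset P_p$ is immediate from the adjunction $\Hom^{*}_{\ol{D}}(F, p) \cong \Hom^{*}_{\Idem(D/P)}(L(F), p)$, with $L\colon D \to \Idem(D/P)$ the localization, since $L(F) = 0$ for $F \in P$. For $P_p \subset P$, I argue by contradiction: if $P_p \supsetneq P$, primality of $P$ forces $P_p \supset \ol{P}$; choosing $\tilde{p} \in \ol{P}$ whose image in $\ol{P}/P$ admits $p$ as a direct summand (possible because $p \in \Idem(\ol{P}/P)$), the adjunction gives $\Hom^{*}(\tilde{p}, p) \cong \Hom^{*}(p, p) \oplus \Hom^{*}(p', p)$, which contains the nonzero $\id_p$, contradicting $\tilde{p} \in P_p$. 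Part (2) is then immediate: $\ol{P} \supsetneq P$ ensures $\Idem(\ol{P}/P)$ has a nonzero object, which is prime by (1).

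For (3), the direction $(\Leftarrow)$ is clean: $\Idem(\ol{P_p}/P_p)$ is a thick subcategory of $\ol{D}$ containing $p$, so any nonzero $q \in \ang{p}$ lies in $\Idem(\ol{P_p}/P_p) \setminus \{0\}$, and (1) gives the claim. For $(\Rightarrow)$, a prime object $q$ defining $P_p$ automatically lies in $\Idem(\ol{P_p}/P_p) \setminus \{0\}$, so it suffices to show $\ang{p} = \Idem(\ol{P_p}/P_p)$ inside $\ol{D}$. By primality, the Verdier quotient $\ol{P_p}/P_p$ has only the two trivial thick subcategories (via the order-preserving correspondence with thick subcategories of $\ol{P_p}$ containing $P_p$), so any nonzero object is a classical generator. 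The plan is to apply this to $\tilde{p} \in \ol{P_p}/P_p$ having $p$ as summand and then transport through summand-closure in $\ol{D}$, so that $\ang{p}$ exhausts $\Idem(\ol{P_p}/P_p)$ and in particular contains $q$.

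For (4), observe that $\Idem(D)$ is a thick subcategory of $\ol{D}$ (it is the summand-closure of $D$, and is closed under triangles and shifts). By (3), $q \in \ang{p}$, so $p \in \Idem(D)$ implies $\ang{p} \subset \Idem(D)$, whence $q \in \Idem(D)$; the converse follows by swapping the roles of $p$ and $q$.

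The main obstacle is the last step of the $(\Rightarrow)$ direction of (3). Passing from ``$\ol{P_p}/P_p$ has only trivial thick subcategories'' to ``$\ang{p}$ equals $\Idem(\ol{P_p}/P_p)$'' is delicate, because the idempotent completion of a category with only trivial thick subcategories can, in principle, acquire new proper thick subcategories carved out by splittings of a generator. The argument must exploit that $p$ itself arises as a summand of an object of $\ol{P_p}/P_p$ together with the primality of $P_p$ to rule out such pathologies, most likely by analyzing the intersection of $\ang{p}$ with $\ol{P_p}/P_p$ and using the dichotomy forced by primality.
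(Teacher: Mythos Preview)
Your arguments for (1), (2), the ($\Leftarrow$) half of (3), and (4) are correct and close to the paper's approach; the paper packages (1) via a separate lemma about local triangulated categories (if $0$ is prime then $\Hom^*(F,p)\neq 0$ for every nonzero $F$ and every nonzero $p\in\ol{0}$, because $p\in\ang{F}$), but the content is the same as your contradiction argument.

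The ``main obstacle'' you flag in the ($\Rightarrow$) direction of (3) is not actually an obstacle: idempotent completion never creates new thick subcategories. The missing observation is that for any triangulated category $T$ and any $x\in\Idem(T)$ one has $x\oplus x[1]\in T$, by Thomason's characterization of objects in a dense triangulated subcategory via $K_0$ (the class $[x\oplus x[1]]=[x]-[x]=0$ certainly lies in the image of $K_0(T)$). Applying this with $T=\ol{P_p}/P_p$ and $x=p$ gives a nonzero element $p\oplus p[1]$ of $\ang{p}\cap(\ol{P_p}/P_p)$; your proposed dichotomy then forces $\ang{p}\supset\ol{P_p}/P_p$, and closure under summands yields $\ang{p}=\Idem(\ol{P_p}/P_p)\ni q$. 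So your worry that ``splittings of a generator'' might carve out new proper thick subcategories is unfounded, and your own suggested plan (intersect with $\ol{P_p}/P_p$ and use the dichotomy) goes through immediately once you know the intersection is nonzero. The paper does not spell this step out either, asserting only that (3) follows from (1); the bijection $\Th(T)\cong\Th(\Idem(T))$ underlying this is already implicit in the cited isomorphism $\tSpec T\cong\tSpec\Idem(T)$ from the preliminaries.
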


\begin{proof}
  The assertions (2), (3), and (4) follow immediately from (1).
  Let us prove (1).
  Take any object \(F \in D\), and let \(L: D \to D/P\) be the localization functor.
  Then, by adjunction, we have \(\Hom^*(F, p) = \Hom^*(L(F), p)\).
  The following lemma shows that \(\Hom^*(L(F), p) = 0\) if and only if \(L(F) = 0\), which is equivalent to \(F \in P\).
\end{proof}

\begin{lem}\label{lem:non_zeroness_of_prime_object}
  Let \(D\) be a local triangulated category, i.e., the trivial thick subcategory \(0\) is a prime thick subcategory.
  Then for any object \(F \in D \setminus 0\) and \(p \in \overline{0} \setminus 0\), we have
  \[
    \Hom^*(F, p) \neq 0,\quad \Hom^*(p, F) \neq 0.
  \]
\end{lem}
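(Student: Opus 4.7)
The plan is a short contradiction argument that extracts everything from the minimality of \(\overline{0}\) as the smallest nonzero thick subcategory of \(D\). Suppose towards a contradiction that \(\Hom^*(F, p) = 0\), and introduce the full subcategory
\[
J = \{ G \in D \mid \Hom^*(G, p) = 0 \}.
\]
The first routine step is to check that \(J\) is a thick subcategory of \(D\): it is closed under shifts because \(\Hom^*\) is graded, closed under direct summands trivially, and closed under cones via the long exact sequence obtained by applying the cohomological functor \(\Hom^*(-, p)\) to a distinguished triangle.

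Since \(F \in J\) and \(F \neq 0\), the thick subcategory \(J\) is nonzero. By hypothesis \(0\) is a prime thick subcategory of \(D\), so the minimum nonzero thick subcategory \(\overline{0}\) exists and is contained in every nonzero thick subcategory; in particular \(\overline{0} \subseteq J\). Since \(p \in \overline{0}\), we obtain \(p \in J\), i.e., \(\Hom^*(p, p) = 0\); but then \(\id_p = 0\), which forces \(p = 0\), contradicting \(p \in \overline{0} \setminus 0\).

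For the dual conclusion \(\Hom^*(p, F) \neq 0\), I would run the same argument verbatim with
\[
J' = \{ G \in D \mid \Hom^*(p, G) = 0 \}
\]
in place of \(J\); thickness follows from the long exact sequence for \(\Hom^*(p, -)\), the subcategory contains \(F\) and is therefore nonzero, hence contains \(\overline{0}\) and in particular \(p\), giving again \(\Hom^*(p, p) = 0\) and the same contradiction. There is no serious obstacle: the entire proof is a direct unwinding of the definition of a prime thick subcategory, the only subtlety being the recognition of the correct annihilator-style thick subcategories \(J\) and \(J'\) and the observation that the vanishing of \(\id_p\) is enough to conclude \(p = 0\).
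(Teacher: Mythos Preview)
Your proof is correct and is essentially the same as the paper's: the paper observes that \(p \in \overline{0} \subseteq \langle F\rangle\) and that \(\Hom^*(F,p)=0\) forces \(\Hom^*(E,p)=0\) for all \(E \in \langle F\rangle\), hence \(\id_p = 0\), which is exactly your annihilator argument phrased in terms of \(\langle F\rangle\) rather than \(J\). The dual statement is handled identically in both.
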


\begin{proof}
  Since \(\overline{0}\) is the smallest non-zero thick subcategory, we have \(p \in \ang{F}\).
  If \(\Hom^*(F, p) = 0\), then \(\Hom^*(E, p) = 0\) for any \(E \in \ang{F}\), and in particular \(\Id_p = 0\), which leads a contradiction.
  By duality, we also get \(\Hom^*(p, F) \neq 0\).
\end{proof}

\begin{prop}\label{prop:small_prime_object_defines_closed_point}
  Let \(p \in \Idem(D)\) be a prime object.
  Then \(P_p\) is a minimal prime thick subcategory.
  In particular, the corresponding point \(x_p \in \tSpec D\) is a closed point.
\end{prop}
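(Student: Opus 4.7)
The plan is to prove minimality of $P_p$ by contradiction, transferring the hypothesis into a local triangulated category and applying Lemma \ref{lem:non_zeroness_of_prime_object}. As a preliminary reduction, I would replace $D$ by $\Idem(D)$: by Theorem \ref{thm:foundamel_results_for_Matsui_spectrum} (3) this does not change the Matsui spectrum or the bijection of prime thick subcategories, and afterwards $p$ becomes a genuine object of $D$.

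Now suppose toward contradiction that some prime thick subcategory $Q \subsetneq P_p$ exists. By primality of $Q$ there is a smallest strictly larger thick subcategory $\overline{Q}$, and since $P_p$ itself is such an extension, $\overline{Q} \subseteq P_p$. Pick any $a \in \overline{Q} \setminus Q$; then $a \in P_p$ gives $\Hom^{*}_D(a, p) = 0$.

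The key step is to reinterpret this vanishing inside $\Idem(D/Q)$. Let $L_Q : D \to \Idem(D/Q)$ be the localization, with fully faithful right adjoint $R_Q$ furnished by Brown representability on $\overline{D}$. Since $Q \subseteq P_p$, the object $p$ is $Q$-local (i.e., $\Hom^{*}(q, p) = 0$ for all $q \in Q$), so $p \cong R_Q L_Q(p)$ in $\overline{D}$, and by adjunction
\[
\Hom^{*}_{\Idem(D/Q)}(L_Q(a), L_Q(p)) \cong \Hom^{*}_D(a, p) = 0.
\]
On the other hand, since $Q$ is prime in $D$, the zero subcategory is prime in $\Idem(D/Q)$ with $\overline{0} = \Idem(\overline{Q}/Q)$, making $\Idem(D/Q)$ local in the sense of Lemma \ref{lem:non_zeroness_of_prime_object}. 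The object $L_Q(a)$ is a non-zero element of $\overline{0}$ (since $a \in \overline{Q} \setminus Q$), and $L_Q(p)$ is non-zero (otherwise $p \in Q \subseteq P_p$ would force $\id_p = 0$). Lemma \ref{lem:non_zeroness_of_prime_object} then yields $\Hom^{*}(L_Q(a), L_Q(p)) \neq 0$, a contradiction. The second assertion is immediate from the description of closed subsets in $\tSpec D$: the closure of $\{x_{P_p}\}$ consists of all $x_Q$ with $Q \subseteq P_p$ prime, so minimality of $P_p$ is equivalent to closedness of $x_p$.

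I expect the main obstacle to be the clean identification $p \cong R_Q L_Q(p)$ — one has to be careful about where each object lives ($p \in \Idem(D) \subseteq \overline{D}$ versus $L_Q(p) \in \Idem(D/Q) \subseteq \overline{D}$), and invoke both Brown representability and the characterization of $Q$-local objects as the essential image of $R_Q$.
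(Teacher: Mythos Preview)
Your argument is correct and follows essentially the same route as the paper: assume a prime $Q\subsetneq P_p$, pass to the local category $\Idem(D/Q)$, and derive a contradiction from Lemma~\ref{lem:non_zeroness_of_prime_object}. The only cosmetic difference is that the paper picks its witness in $\overline{Q}\setminus Q$ to be a prime object $q$ defining $Q$ (whose existence was just established), whereas you take an arbitrary $a\in\overline{Q}\setminus Q$; both choices feed the lemma equally well. Your explicit handling of the adjunction $\Hom^{*}_{\Idem(D/Q)}(L_Q(a),L_Q(p))\cong\Hom^{*}_D(a,p)$ via $Q$-locality of $p$ is exactly what the paper's terse phrase ``after replacing $D$ with $\Idem(D/Q)$'' is hiding, so no gap there.
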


\begin{proof}
  The last assertion follows from the previous proposition.
  Assume there exists a prime thick subcategory \(Q\) strictly contained in \(P_p\), and let \(q\) be a prime object defining \(Q\).
  After replacing \(D\) with \(\Idem(D/Q)\), we may assume \(Q = 0\).
  Then, by Lemma~\ref{lem:non_zeroness_of_prime_object}, we have \(\Hom^*(q, p) \neq 0\).
  On the other hand, since \(q \in \overline{Q} \subset P_p\), we have \(\Hom^*(q, p) = 0\), which leads a contradiction.
\end{proof}

\begin{cor}\label{cor:exceptional_object}
  Let \(E \in D\) be an exceptional object.
  Then the associated prime thick subcategory \(P_E\) defines a closed point in \(\tSpec D\).
  Moreover, if \(D\) is classically finitely generated, the point is open and closed.
\end{cor}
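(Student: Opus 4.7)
The two assertions follow respectively from Proposition~\ref{prop:small_prime_object_defines_closed_point} and Proposition~\ref{prop:semiorthogonal_decomposition}.

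For closedness, the preceding example records that an exceptional object $E$ is a prime object (cf.~\cite{HO}), so $P_E$ is a well-defined prime thick subcategory and $E \in D \subset \Idem(D)$. Proposition~\ref{prop:small_prime_object_defines_closed_point} then immediately implies that $P_E$ is a minimal prime thick subcategory, and hence the corresponding point $x_E \in \tSpec D$ is closed.

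For openness under the additional assumption that $D$ is classically finitely generated, the plan is to exhibit a semiorthogonal decomposition of $D$ having $\ang{E}$ as one of its two factors. Since $E$ is exceptional, $\End^*(E) \cong k$ and $\ang{E}$ is equivalent to $\perf(k)$. Using the dg-enhancement, the inclusion $\ang{E} \hookrightarrow D$ admits a right adjoint $F \mapsto \Hom^*(E, F) \otimes_k E$, whose counit cofiber lies in $\ang{E}^{\perp} := \{F \in D \mid \Hom^*(E, F) = 0\}$. This yields the semiorthogonal decomposition $(\ang{E}^{\perp}, \ang{E})$ of $D$, and Proposition~\ref{prop:semiorthogonal_decomposition} then produces a canonical open immersion
\[
\tSpec \ang{E}^{\perp} \coprod \tSpec \ang{E} \to \tSpec D.
\]
Since $\tSpec \ang{E} \cong \tSpec \perf(k) = \Spec k$ is a single point whose image in $\tSpec D$ is precisely $\{x_E\}$, the singleton $\{x_E\}$ is open, as required.

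The main technical hurdle is the admissibility of $\ang{E}$, which is the nontrivial input needed to construct the semiorthogonal decomposition. Classical finite generation of $D$, combined with the dg-enhancement and Brown representability in $\overline{D}$, should suffice to produce the right adjoint and to guarantee that the counit cofiber stays inside $D$; the standard literature on exceptional collections (Bondal--Kapranov, Orlov) provides the relevant framework. All other ingredients are direct applications of propositions already established in this section.
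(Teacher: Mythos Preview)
Your proposal is correct and matches the paper's intended (implicit) argument: the corollary is stated without proof precisely because closedness is immediate from Proposition~\ref{prop:small_prime_object_defines_closed_point} (exceptional objects being prime objects lying in \(D\subset\Idem(D)\)), and openness follows from Proposition~\ref{prop:semiorthogonal_decomposition} via the standard semiorthogonal decomposition \((\ang{E}^{\perp},\ang{E})\) induced by an exceptional object. Your flagged technical hurdle---that the right adjoint \(F\mapsto \Hom^*(E,F)\otimes_k E\) actually lands in \(D\)---is the only point requiring care, and it is handled by the usual Bondal--Kapranov argument once one knows \(\Hom^*(E,F)\) is a perfect \(k\)-complex; the paper tacitly assumes this as well.
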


\begin{cor}
  Let \(X\) be a scheme and \(x \in X\) a closed point.
  Then the corresponding point \(x \in \tSpec(\perf(X))\) is a closed point.
\end{cor}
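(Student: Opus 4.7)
The plan is to produce an explicit prime object representing the point \(x\), living inside \(\perf(X)\) itself, and then to quote Proposition \ref{prop:small_prime_object_defines_closed_point}.

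First, I would apply Thomason's support construction (\cite[Lemma 3.4]{Th}), already invoked in the proof of Proposition \ref{prop:object_properness}, to the closed subset \(\{x\} \subset X\); since \(X\) is topologically noetherian, this closed subset has quasi-compact complement, so there exists a perfect complex \(F \in \perf(X)\) with \(\Supp_X(F) = \overline{\{x\}} = \{x\}\). By the example immediately preceding Proposition \ref{prop:small_prime_object_defines_closed_point}, the hypothesis that \(x\) is a closed point means the further localization \(F \otimes \sO_{X,x}\) is not needed: the complex \(F\) itself is a prime object of \(\overline{D} = \qcoh(X)\), and its defining prime thick subcategory \(P_F \subset \perf(X)\) is precisely the prime subcategory \(P_x\) corresponding to the image of \(x\) under the open immersion of Theorem \ref{thm:foundamel_results_for_Matsui_spectrum} (4).

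Next I would observe that \(F\) in fact lies in \(\Idem(\perf(X))\). This uses only the standard fact that \(\perf(X)\) is idempotent complete for any topologically noetherian (more generally, quasi-compact quasi-separated) scheme, so that \(\Idem(\perf(X)) = \perf(X)\), and in particular \(F\) itself is an object of \(\Idem(D)\) in the sense of the preceding proposition.

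Having these two ingredients, the conclusion is immediate: Proposition \ref{prop:small_prime_object_defines_closed_point} applies to the prime object \(F \in \Idem(\perf(X))\) and yields that \(P_x = P_F\) is a minimal prime thick subcategory, equivalently that the corresponding point \(x \in \tSpec \perf(X)\) is closed. There is no real obstacle here; the only thing requiring care is the verification that for a \emph{closed} point one can use \(F\) rather than a non-perfect localization as the prime object, which is precisely the content of the worked example.
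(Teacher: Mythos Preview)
Your proposal is correct and follows exactly the approach implicit in the paper: the corollary is stated without proof because it is an immediate consequence of the example preceding Proposition~\ref{prop:small_prime_object_defines_closed_point} (a perfect complex \(F\) with support \(\{x\}\) is itself a prime object for a closed point \(x\)) together with that proposition applied to \(F \in \perf(X) = \Idem(\perf(X))\). Your invocation of Thomason's lemma to produce \(F\) and the idempotent-completeness of \(\perf(X)\) are exactly the two ingredients needed.
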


\section{Proof of Theorem \ref{thm:main_open_subschemes_of_abelian_varieties}}

Let \(A\) be an abelian variety over an algebraically closed field \(k\) of characteristic zero. Let \(B \subset A\) be a abelian subvariety, and \(H \in \operatorname{NS}(B)_{\mathbb{Q}}\) a rational class in the N\'eron-Severi group of \(B\).

A coherent sheaf \(F\) on \(A\) is called semi-homogeneous of type \((B, H)\) if there exists a closed point \(a \in A\) and a semi-homogeneous vector bundle \(\widetilde{F}\) on \(B\) of slope \(H\) such that \(F \cong T_a^* i_* \widetilde{F}\), where \(i: B \to A\) is the natural inclusion and \(T_a: A \to A\) is the translation map defined by \(a' \mapsto a + a'\).

By \cite{Mu}, there exists a fine moduli space \(M_{B,H}\) parameterizing simple semi-homogeneous sheaves of type \((B, H)\), along with a universal sheaf \(E_{B,H}\) on \(M_{B,H} \times A\). By construction, \(M_{B,H}\) is isomorphic to an abelian variety (though not canonically pointed).
The Fourier-Mukai transform
\[
\Phi_{B,H} := \Phi_{E_{B,H}}: \perf(M_{B,H}) \to \perf(A)
\]
induced by the kernel \(E_{B,H}\) is an equivalence of triangulated categories, as shown in \cite[Proposition 4.11]{Mu}. Furthermore, it induces an open immersion
\[
\Phi_{B,H}: M_{B,H} \to \tSpec \perf(A).
\]

\begin{prop}
  There is a canonical open immersion 
  \[
    \coprod \Phi_{B,H}: \coprod_{B \subset A,\ H \in \operatorname{NS}(B)_{\mathbb{Q}}} M_{B,H} \to \tSpec \perf(A).
  \]
\end{prop}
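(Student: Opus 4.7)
The plan is to reduce the claim to a disjointness statement. Each constituent map $\Phi_{B,H}: M_{B,H} \to \tSpec \perf(A)$ is already an open immersion by Theorem \ref{thm:foundamel_results_for_Matsui_spectrum}(5) together with Mukai's derived equivalence $\perf(M_{B,H}) \cong \perf(A)$. Hence, once I show that the images for distinct $(B,H) \neq (B',H')$ are disjoint inside $\tSpec \perf(A)$, a disjoint union of open immersions with pairwise disjoint images is automatically an open immersion, and the claim follows.

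To make the images explicit, I would use that under $\Phi_{B,H}$ the skyscraper $\mathcal{O}_m$ of a closed point $m \in M_{B,H}$ is sent to the simple semi-homogeneous sheaf $F_m := E_{B,H}|_{\{m\}\times A}$ of type $(B,H)$. The image of $m$ in $\tSpec \perf(A)$ is therefore the prime thick subcategory
\[
P_{F_m} = \{\, G \in \perf(A) \mid \Hom^*(G, F_m) = 0 \,\},
\]
which is a closed point by Proposition \ref{prop:small_prime_object_defines_closed_point}. Non-closed points of $M_{B,H}$ are described analogously via $E_{B,H}|_{\overline{\{m\}} \times A}$. Injectivity inside a single $M_{B,H}$ is built into the fact that $\Phi_{B,H}$ is already an open immersion, so the only real content is injectivity across distinct components.

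To achieve that, I would argue that the pair $(B,H)$ is recoverable from the prime $P_{F_m}$. By Mukai's classification \cite{Mu}, the support of $F_m$ is a translate of $B$ and the slope of the underlying bundle $\widetilde{F}_m$ on $B$ equals $H$; both are isomorphism invariants of $F_m$. Applying Lemma \ref{lem:non_zeroness_of_prime_object} to $\Idem(\overline{P_{F_m}}/P_{F_m})$, any nonzero object there generates the same thick closure as $F_m$, so any competing simple semi-homogeneous sheaf $F_{m'}$ with $P_{F_{m'}} = P_{F_m}$ must be a shift of $F_m$, forcing $(B',H') = (B,H)$.

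The step I expect to be the principal obstacle is this last recoverability: proving that two simple semi-homogeneous sheaves of genuinely different types cannot define the same prime thick subcategory. This should follow by combining Mukai's classification \cite{Mu} with the point-object classification of de Jong-Olsson \cite{JO} invoked elsewhere in the paper; the delicate case is that of non-closed points of $M_{B,H}$, where fibers are replaced by generic stalks and one must verify that the support type and slope remain intrinsic invariants of the prime.
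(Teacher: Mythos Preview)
Your overall architecture matches the paper exactly: each $\Phi_{B,H}$ is an open immersion, so the only content is pairwise disjointness of the images, and this is detected on closed points by comparing the corresponding simple semi-homogeneous sheaves. There is, however, a real gap in your key step, and an unnecessary worry at the end.

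\textbf{The gap.} From Lemma~\ref{lem:non_zeroness_of_prime_object} you only get that two nonzero objects of $\Idem(\overline{P}/P)$ generate the same thick subcategory; you cannot conclude from this alone that $F_{m'}$ is a shift of $F_m$. (Already in $\perf(k)$ the objects $k$ and $k\oplus k$ generate the same thick subcategory without being shifts of one another.) The paper closes this gap with Lemma~\ref{lem:uniqueness_of_point_object}: transport $F_{m'}$ back through the equivalence $\Phi_{B,H}^{-1}$ to obtain a prime object in $\perf(M_{B,H})$ defining the regular closed point $b$, check that it satisfies $\Hom^{<0}=0$ and $\Hom^0=k$ because $F_{m'}$ is a simple sheaf, and then invoke the Bondal--Orlov argument to conclude it is a shift of $k(b)$. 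Hence $F_{m'}\cong E_b[n]=F_m[n]$ and $(B',H')=(B,H)$. Your sentence needs exactly this lemma, not Lemma~\ref{lem:non_zeroness_of_prime_object}.

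\textbf{The unnecessary worry.} You do not need to analyze non-closed points of $M_{B,H}$ at all. If the images of $\Phi_{B,H}$ and $\Phi_{B',H'}$ meet, their intersection is open; its preimage in $M_{B,H}$ is a nonempty open subset of a variety, hence contains a closed point $b$. Its image is a closed point of $\tSpec\perf(A)$ (Proposition~\ref{prop:small_prime_object_defines_closed_point}), so its preimage under the open immersion $\Phi_{B',H'}$ is a closed point $b'$ of $M_{B',H'}$. This is precisely the reduction the paper makes, and it lets you run the comparison entirely at closed points; de~Jong--Olsson is not needed here.
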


\begin{proof}
  Let \(b \in M_{B,H}\) be a closed point, and let \(E_b\) be the corresponding semi-homogeneous sheaf. Denote by \(P_b \subset \perf(M_{B,H})\) the corresponding prime thick subcategory. Since \(k(b)\) is a prime object defining \(P_b\), its image \(E_b = \Phi_{B,H}(k(b))\) is the prime object defining the prime thick subcategory \(\Phi_{B,H}(P_b) \subset \perf(A)\).
  By Lemma \ref{lem:uniqueness_of_point_object}, two non-isomorphic semi-homogeneous sheaves define different prime thick subcategories. Since each \(\Phi_{B,H}\) is an open immersion, if 
  \(\Phi_{B,H}(M_{B,H}) \cap \Phi_{B',H'}(M_{B',H'}) \neq \emptyset\),
  then there exist closed points \(b \in M_{B,H}\) and \(b' \in M_{B',H'}\) such that \(\Phi_{B,H}(b) = \Phi_{B',H'}(b')\), i.e., \(E_b \cong E_{b'}\) up to shift. But then \(B = B'\), \(H = H'\), and \(b = b'\). Therefore, the images are disjoint.
\end{proof}

\begin{lem}\label{lem:uniqueness_of_point_object}
  Let \(X\) be a scheme, \(x \in X\) a regular point, \(P_x\) the prime thick subcategory of \(\perf(X)\) corresponding to \(x\), and \(F \in \qcoh(X)\) a prime object defining \(P_x\).
  Assume
  \[
    \Hom^i(F, F) =
    \begin{cases}
      0 & (i < 0), \\
      k & (i = 0),
    \end{cases}
  \]
  where \(k\) is a field. Then \(F\) is isomorphic to a shift of \(k(x)\).
\end{lem}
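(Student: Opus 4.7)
My plan is to combine the preceding proposition, which describes prime objects defining $P_x$, with the standard spectral sequence that computes $\Hom^*(F,F)$ from the cohomology sheaves of $F$, reducing the statement to an elementary fact about finite-length modules over a local ring.

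Since $x$ is regular, the residue field $k(x)$ is a prime object defining $P_x$ by the earlier example. By part~(3) of the preceding proposition, any other prime object $F \in \qcoh(X)$ defining $P_x$ lies in the thick subcategory $\langle k(x) \rangle \subset \qcoh(X)$. Consequently $F$ has bounded cohomology, and each $\mathcal{H}^i(F)$ is a finite iterated extension of $k(x)$ in the abelian category of quasi-coherent sheaves. Passing to the stalk at $x$, each $\mathcal{H}^i(F)_x$ is a finite-length module over $R := \sO_{X,x}$ whose Jordan--H\"older factors are all $k(x)$.

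Let $n := \min\{i : \mathcal{H}^i(F) \neq 0\}$, $m := \max\{i : \mathcal{H}^i(F) \neq 0\}$, $M := \mathcal{H}^n(F)$, and $N := \mathcal{H}^m(F)$. Apply the spectral sequence
\[
E_2^{p,q} = \bigoplus_{i - j = q} \Hom^p(\mathcal{H}^j F, \mathcal{H}^i F) \;\Longrightarrow\; \Hom^{p+q}(F, F)
\]
used in the proof of Proposition \ref{prop:object_properness}. The corner entry $E_2^{0,\,n-m}$ has only the $(i,j)=(n,m)$ summand $\Hom_R(N, M)$. Incoming differentials vanish because $E_2^{p,*} = 0$ for $p < 0$; outgoing differentials $d_r : E_r^{0,\,n-m} \to E_r^{r,\,n-m-r+1}$ vanish for every $r \ge 2$ because already $E_2^{r,\,n-m-r+1} = 0$, since no $j \in [n,m]$ can satisfy $j + n - m - r + 1 \in [n,m]$ once $r \ge 2$. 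Hence $E_\infty^{0,\,n-m} = \Hom_R(N, M)$ occurs as a subquotient of $\Hom^{n-m}(F, F)$. If $n < m$, the hypothesis $\Hom^{<0}(F, F) = 0$ then forces $\Hom_R(N, M) = 0$; but the top-to-socle composition
\[
N \twoheadrightarrow N/\mathfrak{m}_x N \twoheadrightarrow k(x) \hookrightarrow \{\, y \in M : \mathfrak{m}_x y = 0 \,\} \hookrightarrow M
\]
is a non-zero $R$-linear map whenever $M$ and $N$ are non-zero. This contradiction gives $n = m$, so $F \cong M[-n]$.

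The hypothesis $\Hom^0(F, F) = k$ now reads $\End_R(M) = k$, and it remains to show $M \cong k(x)$. If $\mathrm{length}(M) \ge 2$, either $M$ is semisimple with at least two composition factors (so $\End_R(M) \cong M_s(k(x))$ has dimension $\ge 4$), or $M$ has proper socle and the same top-to-socle construction yields a non-zero $R$-linear endomorphism $\psi$ of $M$ whose square factors through $\mathrm{soc}(M) \hookrightarrow M \twoheadrightarrow M/\mathrm{soc}(M)$ and hence vanishes; such a non-zero nilpotent $\psi$ cannot be a scalar multiple of $\id_M$. Either way $\End_R(M) \supsetneq k$, contradicting the hypothesis. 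Therefore $M \cong k(x)$ and $F \cong k(x)[-n]$, as required. The main technical point is the spectral-sequence bookkeeping above: one must verify that the extreme entry $E_2^{0,\,n-m}$ really survives to $E_\infty$, so that its non-vanishing detects a negative-degree $\Hom$. Everything else is standard algebra over local rings.
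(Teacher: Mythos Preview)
Your argument is essentially the paper's proof spelled out: the paper simply localizes $X$ at $x$ so that $x$ becomes the closed point and $F\in\langle k(x)\rangle$ becomes a perfect complex over the regular local ring $\sO_{X,x}$, and then invokes the proof of \cite[Proposition~2.2]{BO}, which is exactly your spectral-sequence reduction to a single cohomology sheaf followed by the endomorphism-ring analysis. One small slip in your final step: the claim that $\psi^{2}=0$ tacitly uses $\mathrm{soc}(M)\subseteq\mathrm{rad}(M)$, which fails when $M$ has a simple direct summand---but in that case $\End_R(M)$ carries a nontrivial idempotent, so the conclusion still holds (or, more directly: if $M$ is not semisimple then multiplication by some $r\in\mathfrak m_x$ is already a nonzero nilpotent endomorphism of $M$).
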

\begin{proof}
  If necessary, by localizing \(X\), we may assume that \(x\) is a closed point and that \(F\in\perf(X)\).
  Then, the assertion follows from the proof of \cite[Proposition 2.2]{BO}.
\end{proof}

\begin{proof}[Proof of Theorem \ref{thm:main_open_subschemes_of_abelian_varieties}]
Since \(\perf(X)\) is smooth, we have \(\Delta_* \sO_X \in \perf(X \times X)\), where \(\Delta\) denotes the diagonal morphism.
Hence, \(\Delta(X) = \Supp_{X \times X}(\Delta_* \sO_X)\) is closed in \(X \times X\), and thus \(X\) is separated.
The smoothness of \(\perf(X)\) also implies that \(X\) is smooth by \cite[Proposition 3.13]{Lu}, (see also the proof of \cite[Proposition 3.31]{Or3}).
Moreover, \(X\) is connected, because \(\Gamma(X, \sO_X) \cong \mathrm{HH}^0(X, \sO_X) \cong \mathrm{HH}^0(U, \sO_U) \cong \Gamma(U, \sO_U)\) has no nontrivial idempotents.

There are natural open immersions \(X \to \tSpec \perf(U)\) and \(\tSpec \perf(U) \to \tSpec \perf(A)\) by Theorem \ref{thm:foundamel_results_for_Matsui_spectrum} (5), (2), and (3), together with the natural isomorphism \(\perf(U) \cong \Idem(\perf(A)/\perf_{A \setminus U}(A))\).
Let \(x \in X\) be a closed point, and let \(P_x\) be the corresponding prime thick subcategory of \(\perf(U)\), and \(\overline{P_x}\) the corresponding prime thick subcategory of \(\perf(A)\).
Let \(\phi: \perf(X) \xrightarrow{\sim} \perf(U)\) be the equivalence given by hypothesis, and set \(F = \phi(k(x))\).
Let \(i: U \to A\) denote the natural inclusion, and define \(\overline{F} = \mathbf{R} i_*(F) \in \qcoh(A)\).
The object \(F\) is a prime object defining \(P_x\), and hence \(\overline{F}\) is a prime object defining \(\overline{P_x}\).
Since \(k(x)\) is proper, \(F\) is also proper.
By Proposition \ref{prop:object_properness}, \(\Supp_X(F)\) is a proper \(k\)-scheme.
Thus, \(\overline{F}\) is a perfect complex, and \(\mo{Ext}^i(\overline{F}, \overline{F}) \cong \mo{Ext}^i(F, F) \cong \mo{Ext}^i(k(x), k(x))\) for all \(i\).
By the assumption \(\dim X \le \dim A\), \(\overline{F}\) is a point object in the sense of \cite{JO}, and is isomorphic (up to shift) to a simple semi-homogeneous sheaf \(G\) by \cite{JO}.
This implies that the image of \(x\) in \(\tSpec \perf(A)\) is the point corresponding to \(G\) in \(\coprod_{B, H} M_{B,H}\).
Since \(\coprod_{B, H} M_{B,H}\) is open in \(\tSpec \perf(A)\), it is generalization-closed, and hence the image of \(X\) lies in \(\coprod_{B, H} M_{B,H}\).
As \(X\) is connected, it must be contained in a connected component \(M_{B,H}\), and each \(M_{B,H}\) is isomorphic to an abelian variety and is a Fourier-Mukai partner of \(A\).
\end{proof}

\begin{proof}[Proof of Corollary \ref{cor:main_proper_FM_partner_of_abelian_varieties}]
By the same reasoning as in the proof of Theorem \ref{thm:main_open_subschemes_of_abelian_varieties}, \(X\) is separated.
By \cite[Proposition 3.31]{Or3}, \(X\) is smooth and proper.
Moreover, by \cite[Theorem 2.3]{Ka}, we have \(\dim X = \dim A\).
By Theorem \ref{thm:main_open_subschemes_of_abelian_varieties}, \(X\) is isomorphic to one of the \(M_{B,H}\), and hence is an abelian variety.
\end{proof}

\begin{cor}\label{cor:proper_Fourier_Mukai_locus_of_abelian_varieties}
  \[
    \tSpec^{\pFM} \perf(A) = \tSpec^{\FM} \perf(A) = \coprod_{B \subset A,\ H \in \operatorname{NS}(B)_{\mathbb{Q}}} M_{B,H}.
  \]
\end{cor}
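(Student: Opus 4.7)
The plan is to establish the two equalities by a chain of three inclusions:
\[
\coprod_{B,H} M_{B,H} \;\subset\; \tSpec^{\FM}\perf(A) \;\subset\; \tSpec^{\pFM}\perf(A) \;\subset\; \coprod_{B,H} M_{B,H}.
\]

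First I would handle the easy inclusion $\coprod_{B,H} M_{B,H} \subset \tSpec^{\FM}\perf(A)$. Each $M_{B,H}$ is (non-canonically) isomorphic to an abelian variety, hence in particular a projective $k$-scheme, and Mukai's Fourier--Mukai kernel $E_{B,H}$ provides an equivalence $\perf(M_{B,H}) \cong \perf(A)$. By Theorem \ref{thm:foundamel_results_for_Matsui_spectrum} (5) this gives an open immersion $M_{B,H} \to \tSpec \perf(A)$ whose image is by definition contained in $\tSpec^{\FM}\perf(A)$. Disjointness of images for different $(B,H)$ has already been shown in the proposition preceding Lemma \ref{lem:uniqueness_of_point_object}, so the disjoint union embeds into $\tSpec^{\FM}\perf(A)$.

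The middle inclusion $\tSpec^{\FM}\perf(A) \subset \tSpec^{\pFM}\perf(A)$ is immediate from the definitions, since every projective $k$-scheme is proper, so the union defining $\tSpec^{\FM}$ is taken over a subclass of the schemes defining $\tSpec^{\pFM}$.

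The substantive inclusion is $\tSpec^{\pFM}\perf(A) \subset \coprod_{B,H} M_{B,H}$. Let $x \in \tSpec^{\pFM}\perf(A)$; by definition, $x$ lies in the image of an open immersion $X \to \tSpec \perf(A)$ coming from a proper $k$-scheme $X$ together with an equivalence $\phi\colon \perf(X) \xrightarrow{\sim} \perf(A)$. By Corollary \ref{cor:main_proper_FM_partner_of_abelian_varieties}, $X$ is automatically an abelian variety; in particular $\dim X = \dim A$ and $X$ is smooth and connected. We may then rerun the argument in the proof of Theorem \ref{thm:main_open_subschemes_of_abelian_varieties} with $U = A$: for any closed point $x_0 \in X$, the sheaf $\phi(k(x_0))$ is a proper prime object of $\perf(A)$, hence a point object in the sense of de Jong--Olsson, and therefore isomorphic up to shift to a simple semi-homogeneous sheaf. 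Thus the image of $x_0$ under $X \to \tSpec \perf(A)$ lies in $\coprod_{B,H} M_{B,H}$. Because $\coprod_{B,H} M_{B,H}$ is open in $\tSpec \perf(A)$, it is generalization-closed, and since $X$ is connected, the image of all of $X$ lands in a single component $M_{B,H}$.

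The combined inclusions force all three sets to agree, proving the corollary. There is no real obstacle beyond the careful bookkeeping of the three inclusions: all the hard input is the classification of point objects by de Jong--Olsson, which was already used in the proof of Theorem \ref{thm:main_open_subschemes_of_abelian_varieties}, together with Corollary \ref{cor:main_proper_FM_partner_of_abelian_varieties} which supplies smoothness, connectedness, and the dimension equality needed to invoke that classification for arbitrary proper Fourier--Mukai partners of $A$.
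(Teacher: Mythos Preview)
Your proof is correct and essentially follows the route the paper intends: the paper states the corollary immediately after the proofs of Theorem \ref{thm:main_open_subschemes_of_abelian_varieties} and Corollary \ref{cor:main_proper_FM_partner_of_abelian_varieties} without a separate argument, leaving the reader to extract precisely the chain of inclusions you wrote out. The only minor redundancy is that once you have invoked Corollary \ref{cor:main_proper_FM_partner_of_abelian_varieties} to get $\dim X = \dim A$, you can cite Theorem \ref{thm:main_open_subschemes_of_abelian_varieties} with $U=A$ directly rather than rerunning its proof, since that proof already shows the image of $X$ lands in $\coprod_{B,H} M_{B,H}$.
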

Corollary \ref{cor:proper_Fourier_Mukai_locus_of_abelian_varieties} is a generalization of \cite[Theorem 4.11]{HO} and a strengthening of \cite[Theorem 5.6, Corollary 5.7]{IM}.

In the proof of Theorem \ref{thm:main_open_subschemes_of_abelian_varieties}, the separatedness of the proper Fourier-Mukai locus plays a central role.
Unfortunately, the proper Fourier-Mukai locus is not separated in general; see \cite{It}.
In such cases, we expect that the maximal commutative subspaces (see \cite[Theorem B.9]{Mk}) of the proper Fourier-Mukai locus serve as suitable candidates for proper Fourier-Mukai partners.

\section{Generation of derived categories}

\begin{lem}\label{lem:generation_for_curve}
  Let \(C\) be a geometrically integral smooth proper curve over a field \(k\).
  Then, \(\ang{\sO_C} \neq \perf(C)\).  
  In particular, for any object \(F \in \ang{\sO_C}\), the support \(\Supp_C(F)\) is either \(C\) or \(\emptyset\).
\end{lem}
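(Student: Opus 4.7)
My approach is to identify $\ang{\sO_C}$ with $\perf(A)$ for $A := \rd\Gamma(C, \sO_C)$, and then reduce the support statement to a Nakayama-type conservativity for the generic-fibre base change.

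Since $\sO_C$ is a perfect (hence compact) classical generator of $\ang{\sO_C}$, the functor $\dHom(\sO_C, -) = \rd\Gamma(C, -)$ gives a $k$-linear equivalence of dg-enhanced triangulated categories
\[
  \ang{\sO_C} \xrightarrow{\sim} \perf(A)
\]
by To\"en's theorem (\cite{To}). Geometric integrality and properness of $C$ give $H^0(C, \sO_C) = k$ and $H^i(C, \sO_C) = 0$ for $i \neq 0, 1$, so $A$ is concentrated in cohomological degrees $[0,1]$ with $H^0(A) = k$, and admits an augmentation $\epsilon \colon A \to k$. For degree reasons (concentration in two adjacent degrees and a strict unit in $H^0 = k$), all higher $A_\infty$-operations on $H^*(A)$ vanish, so $A$ is formal: $A \simeq k \oplus V[-1]$ as a dg-algebra, where $V := H^1(C, \sO_C)$.

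Let $j \colon \Spec k(C) \hookrightarrow C$ be the generic point. The restriction $j^* \colon \perf(C) \to \perf(k(C))$ sends $\sO_C$ to $k(C)$; on cohomology, the induced dg-algebra map $A \to k(C)$ vanishes on $H^1$ (since $\eta$ is affine) and is the inclusion $k \hookrightarrow k(C)$ on $H^0$, so it factors through $\epsilon$. Under the equivalence above, $j^*|_{\ang{\sO_C}}$ therefore corresponds to the base-change
\[
  (-) \dotimes_A k(C) \colon \perf(A) \to \perf(k(C)),
\]
which decomposes as $\perf(A) \xrightarrow{(-) \dotimes_A k} \perf(k) \xrightarrow{(-) \otimes_k k(C)} \perf(k(C))$. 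The second factor is conservative since $k(C)$ is a nonzero flat $k$-algebra. The first is Nakayama's lemma for perfect modules over the augmented coconnective dg-algebra $A$; for the explicit $A \simeq k \oplus V[-1]$, this reduces to a cell/bar calculation showing that any nonzero perfect $A$-module has nontrivial image in $\perf(k)$.

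The lemma now follows easily: if $F \in \ang{\sO_C}$ has $\Supp_C(F) \subsetneq C$, then $\Supp_C(F)$ is a finite set of closed points (as $\dim C = 1$), so $j^* F = 0$; by conservativity, $F = 0$, whence $\Supp_C(F) \in \{C, \emptyset\}$. For any closed point $x \in C$, the skyscraper $k(x)$ has $\Supp_C(k(x)) = \{x\}$, neither $C$ nor $\emptyset$, so $k(x) \notin \ang{\sO_C}$, and in particular $\ang{\sO_C} \neq \perf(C)$. The main technical point is the Nakayama conservativity for the first factor of the decomposition; formality of $A$ makes it tractable, though some care is needed to extract a uniform statement in all characteristics.
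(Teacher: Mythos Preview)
Your proof is correct and takes a genuinely different route from the paper's. The paper argues via the lattice of thick subcategories: citing \cite[Proposition~2.13]{HKO}, it deduces from $\mathrm{Ext}^i(\sO_C,\sO_C)=0$ for $i\neq 0,1$ that $\ang{\sO_C}$ has no nontrivial thick subcategories, while $\perf(C)$ does (e.g.\ $\ang{k(c)}$ for a closed point $c$); hence $\ang{\sO_C}\neq\perf(C)$. The support dichotomy is then obtained by contradiction: if some $F\in\ang{\sO_C}$ had finite nonempty support $Z$, then $\perf_Z(C)\subset\ang{\sO_C}$, and passing to the affine complement $C\setminus Z$ would force $\ang{\sO_C}=\perf(C)$.

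Your route reverses the logic, proving the support dichotomy first via conservativity of the generic-fibre functor on $\ang{\sO_C}$ and reading off $\ang{\sO_C}\neq\perf(C)$ as a corollary. The Nakayama step you flag as the main technical point is in fact immediate once formality is in hand: the augmentation ideal $I=V[-1]$ satisfies $I^2=0$, hence is pulled back from $A/I=k$, so the triangle $M\dotimes_A I\to M\to M\dotimes_A k$ reads $(M\dotimes_A k)\otimes_k V[-1]\to M\to M\dotimes_A k$, and $M\dotimes_A k=0$ forces $M=0$. No characteristic hypothesis enters, and your label ``coconnective'' is slightly nonstandard here (cohomology sits in degrees $0$ and $1$), but neither point affects the argument. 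Compared with the paper, your approach avoids the external input from \cite{HKO} and gives an explicit geometric reason (conservativity of $j^*$) for the support constraint, at the price of invoking the dg-Morita machinery of \cite{To}; the paper's argument is lighter on dg technology and stays within the thick-subcategory formalism used throughout the article.
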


\begin{proof}
  By Serre duality, we have \(\mo{Ext}^i(\sO_C, \sO_C) = 0\) for \(i \ne 0,1\) (\cite[Proposition 3.13]{Hu}).
  By \cite[Proposition 2.13]{HKO}, the thick subcategory \(\ang{\sO_C}\) has no nontrivial thick subcategories.
  On the other hand, \(\perf(C)\) does have nontrivial thick subcategories; for example, for a closed point \(c \in C\), the subcategory \(\ang{k(c)}\) is a nontrivial thick subcategory of \(\perf(C)\).  
  This shows that \(\ang{\sO_C} \ne \perf(C)\).

  Suppose, for contradiction, that there exists an object \(F \in \ang{\sO_C}\) such that \(\Supp_C(F) \ne C\) or \(\emptyset\).
  Then \(\Supp_C(F)\) is a finite set of closed points; let \(Z = \Supp_C(F)\).  
  It follows that \(\ang{F} = \perf_Z(C) \subset \ang{\sO_C}.\)
  There is a one-to-one correspondence between thick subcategories of \(\perf(C)\) containing \(\perf_Z(C)\) and thick subcategories of 
  \(\Idem(\perf(C)/\perf_Z(C)) \cong \perf(C \setminus Z)\).
  Since \(C \setminus Z\) is affine, we have \(\ang{\sO_{C \setminus Z}} = \perf(C \setminus Z)\).  
  This implies \(\ang{\sO_C} = \perf(C)\), which contradicts the first part.
\end{proof}

\begin{prop}\label{prop:generation}
  Let \(X\) be a scheme (not necessarily over \(k\)), \(C\) a proper curve over \(k\), and \(f: C \to X\) a morphism.  
  Assume that the image \(f(C)\) is not a point.
  Then for any object \(F \in \ang{\sO_X}\), we have \(f^{-1}(\Supp_X(F)) = C\) or \(\emptyset\).
  In particular, \(\ang{\sO_X} \ne \perf(X)\).
\end{prop}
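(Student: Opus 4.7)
Plan: The strategy is to pull the generation question back from $X$ to $C$ along $f$ and reduce to Lemma \ref{lem:generation_for_curve} applied on a suitable smooth geometrically integral curve. The key functoriality is that the derived pullback $Lf^*\colon\perf(X)\to\perf(C)$ is an exact triangulated functor sending $\mathcal{O}_X$ to $\mathcal{O}_C$, so it carries $\langle\mathcal{O}_X\rangle$ into $\langle\mathcal{O}_C\rangle\subset\perf(C)$. Moreover, for a perfect complex $F$, the support transforms as $\Supp_C(Lf^*F)=f^{-1}(\Supp_X(F))$ by a stalkwise Nakayama argument, so the global conclusion about $f^{-1}(\Supp_X(F))$ is equivalent to a statement about supports of objects of $\langle\mathcal{O}_C\rangle$.

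Since $C$ need not be smooth, integral, or reduced, I would handle this by passing to the normalization of a well-chosen component. Using the hypothesis that $f(C)$ is not a point, pick an irreducible component $C_0\subset C$ (with reduced structure) such that $f(C_0)$ is not a point. Let $\nu\colon\widetilde{C}_0\to C_0\hookrightarrow C$ be the composition of the closed immersion with the normalization; then $\widetilde{C}_0$ is a smooth proper $k$-curve, and over the finite separable extension $k':=H^0(\widetilde{C}_0,\mathcal{O}_{\widetilde{C}_0})$ it becomes a geometrically integral smooth proper $k'$-curve. Setting $g=f\circ\nu\colon\widetilde{C}_0\to X$, for any $F\in\langle\mathcal{O}_X\rangle$ the pullback $Lg^*F$ lies in $\langle\mathcal{O}_{\widetilde{C}_0}\rangle$, so Lemma \ref{lem:generation_for_curve} applied over $k'$ yields $\Supp_{\widetilde{C}_0}(Lg^*F)\in\{\widetilde{C}_0,\emptyset\}$. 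Since $\nu$ is surjective, this descends to $(f|_{C_0})^{-1}(\Supp_X(F))\in\{C_0,\emptyset\}$.

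To upgrade these component-wise dichotomies to the global statement $f^{-1}(\Supp_X(F))\in\{C,\emptyset\}$, I would combine them with the complementary observation that any component of $C$ contracted by $f$ to a single point $p$ is either entirely inside or entirely outside $f^{-1}(\Supp_X(F))$, according to whether $p\in\Supp_X(F)$. Propagating across intersections of components then forces a consistent choice on all of $C$. The ``in particular'' assertion is immediate: since $f(C)$ is closed of positive dimension in $X$, one can produce a perfect complex on $X$ whose support meets $f(C)$ in a proper nonempty subset (for example, restricting attention to an affine neighborhood of a closed point of $f(C)$), and such a complex then cannot lie in $\langle\mathcal{O}_X\rangle$ by the first assertion.

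The step I expect to be the main obstacle is the reduction via normalization: ensuring that derived pullback through the composition $\widetilde{C}_0\to C_0\hookrightarrow C\to X$ really does land in the thick subcategory generated by the structure sheaf, and that support statements on $\widetilde{C}_0$ can be pushed down cleanly to $C_0$ and then glued across all components of $C$. A secondary technical point is confirming the stalkwise identity $\Supp_C(Lf^*F)=f^{-1}(\Supp_X(F))$ for perfect $F$ with no flatness hypothesis on $f$, which is needed to translate between the two sides.
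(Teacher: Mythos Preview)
Your strategy coincides with the paper's: pull back along $Lf^*$ so that $\langle\mathcal{O}_X\rangle$ lands in $\langle\mathcal{O}_C\rangle$, reduce $C$ to a smooth geometrically integral proper curve, and apply Lemma~\ref{lem:generation_for_curve}. The one substantive difference is in how the reduction is carried out, and here your version has a small gap in positive characteristic. You normalize a component $C_0$ first and then pass to the field of constants $k'=H^0(\widetilde{C}_0,\mathcal{O}_{\widetilde{C}_0})$, asserting that $k'/k$ is finite separable and that $\widetilde{C}_0$ is smooth and geometrically integral over $k'$. Over an imperfect base field none of these need hold: $k'/k$ can be purely inseparable, and a regular proper curve need not be smooth or geometrically reduced over its field of constants, so Lemma~\ref{lem:generation_for_curve} is not directly applicable. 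The paper sidesteps this by first base-changing $C$ to the algebraic closure $\bar{k}$ and only then normalizing and taking a connected component; over an algebraically closed field, regular implies smooth and integral implies geometrically integral, so the lemma applies immediately. Adopting that order of operations repairs your argument.

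For the ``in particular'' clause the paper simply invokes \cite[Theorem~3.15]{Th} to obtain a perfect complex supported at a chosen closed point of $f(C)$; your ad hoc construction is aiming at the same object but stated less precisely. Your extra gluing step across irreducible components of $C$ is correct for connected $C$, but the paper does not spell it out: after the reduction one is already working on a single integral curve, which is all that is needed in the applications.
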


\begin{proof}
  The last assertion follows from \cite[Theorem 3.15]{Th}.
  By replacing \(k\) with its algebraic closure, base changing and normalizing \(C\), and taking a connected component, we may assume that \(C\) is a smooth proper integral curve over an algebraically closed field \(k\).
  We then have \(\ang{f^*(\ang{\sO_X})} = \ang{f^* \sO_X} = \ang{\sO_C}\).
  The assertion follows from Lemma \ref{lem:generation_for_curve}.
\end{proof}

\begin{cor}\label{cor:generation_for_line_bundles}
  Let \(L\) be a line bundle on \(X\).  
  Then the same results hold for \(\ang{L}\).
\end{cor}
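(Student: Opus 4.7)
The plan is to reduce the corollary to Proposition \ref{prop:generation} by noting that tensoring with the inverse line bundle $L^{-1}$ is a triangulated autoequivalence of $\perf(X)$ that sends $L$ to $\sO_X$ and preserves supports. Since autoequivalences preserve the lattice of thick subcategories, it induces an equivalence
\[
  -\otimes L^{-1}:\ \ang{L} \xrightarrow{\sim} \ang{\sO_X}.
\]
Thus each $F \in \ang{L}$ corresponds to $F \otimes L^{-1} \in \ang{\sO_X}$, and Proposition \ref{prop:generation} applies.

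The first key step is the support-preservation claim: for any perfect complex $F$ on $X$ and any invertible sheaf $L$, we have $\Supp_X(F \otimes L^{-1}) = \Supp_X(F)$. This is because at each point $x \in X$, the stalk $(F \otimes L^{-1})_x \cong F_x \otimes_{\sO_{X,x}} L^{-1}_x$, and $L^{-1}_x$ is free of rank one, so it vanishes if and only if $F_x$ does. Consequently $f^{-1}(\Supp_X(F\otimes L^{-1})) = f^{-1}(\Supp_X(F))$, and applying Proposition \ref{prop:generation} to $F \otimes L^{-1} \in \ang{\sO_X}$ immediately yields that this preimage is $C$ or $\emptyset$.

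For the last assertion $\ang{L} \neq \perf(X)$, the same autoequivalence argument applies: if $\ang{L} = \perf(X)$, then applying $- \otimes L^{-1}$ would give $\ang{\sO_X} = \perf(X)$, contradicting the corresponding statement in Proposition \ref{prop:generation} (which in turn relied on the existence of a non-constant morphism $f \colon C \to X$ from a proper curve; such a morphism exists under the running hypothesis). The argument has no real obstacle, since the whole content is already encoded in Proposition \ref{prop:generation}; the only thing to verify is the naturality of $\otimes L^{-1}$ with respect to thick-subcategory generation and support, both of which are immediate.
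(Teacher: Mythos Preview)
Your proof is correct and follows essentially the same approach as the paper: both reduce to Proposition~\ref{prop:generation} via the autoequivalence of tensoring with a line bundle (the paper phrases it as applying $-\otimes L$ to the previous results, you phrase it as applying $-\otimes L^{-1}$ to reduce $\ang{L}$ to $\ang{\sO_X}$). Your version simply spells out in more detail why this autoequivalence preserves supports and thick generation, which the paper leaves implicit.
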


\begin{proof}
  Apply the autoequivalence \(- \otimes L: \perf(X) \to \perf(X)\) to the previous results.
\end{proof}

\begin{example}
  \begin{enumerate}
    \item Let \(X\) be a proper scheme of dimension greater than \(1\) over a field \(k\).  
    Then, \(\ang{\sO_X} \ne \perf(X)\).

    \item Let \(\overline{X}\) be a proper scheme of dimension greater than \(2\) over a field \(k\), and let \(Z \subset \overline{X}\) be a closed subset of codimension greater than \(2\).  
    Set \(X = \overline{X} \setminus Z\).  
    Then, \(\ang{\sO_X} \ne \perf(X)\).
    In fact, such a subset \(Z\) can be chosen to be specialization-closed and of codimension \(\ge 2\).  
    Let us sketch the argument.
    Let \(d = \dim \overline{X}\), and take a point \(\eta \in \overline{X}\) of dimension \(d\).
    Choose a transcendence basis \(x_1, \ldots, x_d\) of the residue field \(k(\eta)\) over \(k\).
    Then, there exists a proper curve \(C\) over the intermediate field \(k(x_1, \ldots, x_{d-1})\), and an immersion \(C \to \overline{X}\) whose image avoids any point of codimension \(\ge 2\).  
    In particular, this curve lies entirely inside \(X = \overline{X} \setminus Z\), and Proposition \ref{prop:generation} shows that \(\ang{\sO_X} \ne \perf(X)\).
  \end{enumerate}
\end{example}

\subsection*{Conjectures}
We formulate several conjectures as natural generalizations of Proposition~\ref{prop:generation}.

\begin{conj}[Quasi-affineness]\label{conj:quasi-affineness}
Let \(X\) be a separated scheme.  
Then, \(X\) is quasi-affine if and only if \(\ang{\sO_X} = \perf(X)\).
\end{conj}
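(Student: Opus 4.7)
The plan is to prove the two implications separately. For the direction ``$X$ quasi-affine implies $\ang{\sO_X} = \perf(X)$'', I would write $X$ as an open subscheme $j: X \hookrightarrow Y := \Spec \Gamma(X, \sO_X)$, where $Y$ is affine. Since $\sO_Y$ classically generates $\perf(Y)$, and Thomason--Neeman localization identifies $\perf(X)$ with $\Idem(\perf(Y)/\perf_{Y \setminus X}(Y))$, the image $\sO_X = j^* \sO_Y$ classically generates $\perf(X)$ (classical generation is preserved both by Verdier quotients and by idempotent completion).

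For the converse, assume $\ang{\sO_X} = \perf(X)$. The first step follows immediately from the contrapositive of Proposition \ref{prop:generation}: every morphism $f : C \to X$ from a proper curve $C$ over any field has image a single point. I would then consider the canonical affinization $\pi : X \to Y := \Spec \Gamma(X, \sO_X)$, which is separated since $X$ is, and aim to show it is a quasi-compact open immersion in two sub-steps. First, show $\pi$ is quasi-finite by ruling out positive-dimensional fibers. Second, apply Zariski's Main Theorem to factor $\pi$ as an open immersion $X \hookrightarrow \overline{X}$ followed by a finite morphism $\overline{X} \to Y$; since $\Gamma(X, \sO_X) = A = \Gamma(Y, \sO_Y)$, the finite $A$-algebra $\Gamma(\overline{X}, \sO_{\overline{X}})$ both injects into $A$ and contains $A$, hence equals $A$, so $\overline{X} = Y$ and $\pi$ is the desired open immersion.

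The main obstacle is the quasi-finiteness sub-step: producing a proper curve inside a positive-dimensional fiber of $\pi$. Although $X$ is topologically noetherian, $\pi$ need not be of finite type, and fibers need not be proper over their residue fields, so one cannot directly extract a proper curve from a positive-dimensional component of a fiber. For $X$ of finite type over a field, Nagata's compactification paired with a Bertini-type cutting-down argument supplies a proper curve meeting any positive-dimensional locus without being entirely contracted, contradicting Step 1. For general topologically noetherian separated $X$, one would likely need a Noetherian approximation argument to reduce to the finite-type case, or an altogether different route---for instance, deducing directly from classical generation of $\perf(X)$ by $\sO_X$ that $\sO_X$ is ample in the EGA II sense and invoking the corresponding quasi-affineness criterion. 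It is precisely at this geometric step that I expect the conjecture to be genuinely open.
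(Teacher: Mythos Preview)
Your assessment is accurate: this statement is presented in the paper as a \emph{conjecture}, not a theorem, and the paper does not give a proof of the ``if'' direction. All the paper says after stating the conjecture is that the ``only if'' direction holds, that Proposition~\ref{prop:generation} implies there is no non-constant morphism from a proper curve over any field to \(X\), and that the conjecture fails without separatedness. So your proposal is not to be compared against a paper proof at all---there is none.

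Your argument for the ``only if'' direction is correct and in fact supplies the details the paper omits. For the ``if'' direction you go further than the paper: you reproduce the paper's single observation (no non-constant proper curves, via Proposition~\ref{prop:generation}) and then sketch an affinization/ZMT strategy. You correctly flag the quasi-finiteness step as the obstruction, and your expectation that ``the conjecture is genuinely open'' matches the paper's own treatment. One minor remark on your sketch: even granting quasi-finiteness, to invoke Zariski's Main Theorem you also need \(\pi\) to be of finite type, which is not automatic for a merely topologically noetherian \(X\); and the injectivity \(\Gamma(\overline{X},\sO_{\overline{X}}) \hookrightarrow \Gamma(X,\sO_X)\) used at the end requires \(X\) to be schematically dense in \(\overline{X}\), which should be arranged by passing to the schematic closure first. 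These are secondary to the main gap you already identified.
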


The “only if” direction of Conjecture~\ref{conj:quasi-affineness} holds.  
Conversely, if \(\ang{\sO_X} = \perf(X)\), then Proposition \ref{prop:generation} implies that there exists no non-constant morphism from a proper curve over any field \(k'\) to \(X\).
Note that the conjecture fails without the separatedness assumption; explicit counterexamples exist.

\begin{conj}\label{conj:properness_line_bundle}
Let \(X\) be a proper scheme of dimension \(d\) over a field \(k\) and \(L_1, \ldots, L_d\) line bundles on \(X\).  
Then, \(\ang{L_1,\ldots,L_d} \ne \perf(X)\).
\end{conj}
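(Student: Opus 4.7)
The plan is to argue by induction on $d$. The base case $d=1$ is exactly Corollary~\ref{cor:generation_for_line_bundles}: a single line bundle cannot classically generate $\perf(C)$ for a proper curve $C$.

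For the inductive step, I would assume, for contradiction, that $\ang{L_1,\ldots,L_d}=\perf(X)$ for a proper scheme $X$ of dimension $d\ge 2$. The natural strategy is to cut down by a divisor. In the projective case, I would fix a very ample class $H$ and take an effective Cartier divisor $i\colon Y\hookrightarrow X$ in $|H|$ of dimension $d-1$. The structure sequence $0\to \sO_X(-Y)\to \sO_X\to i_*\sO_Y\to 0$, combined with the projection formula, shows that $i_*\perf(Y)$ lies in the thick subcategory generated by $\{L_j,L_j(-Y)\}_{j=1}^d$; iterating down along a flag of hyperplane sections $Y_{d-1}\supset\cdots\supset Y_1=C$ produces a curve $C\subset X$. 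The hope would be to conclude that the restrictions $i_C^*L_1,\ldots,i_C^*L_d$ classically generate $\perf(C)$, reducing to a one-dimensional question.

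The principal obstacle is twofold. First, the inductive hypothesis rules out $d-1$ line bundles on a $(d-1)$-dimensional scheme, but the restriction procedure leaves $d$ line bundles on $Y$; after descending to a curve one has $d\ge 2$ line bundles which can legitimately generate (for instance $\sO_{\mathbb{P}^1}$ and $\sO_{\mathbb{P}^1}(1)$ generate $\perf(\mathbb{P}^1)$). Second, even granting $\ang{L_1,\ldots,L_d}=\perf(X)$, classical generation does not descend for free: $i_C^*$ is not essentially surjective, so $i_C^*\ang{L_1,\ldots,L_d}$ only classically generates a subcategory of $\perf(C)$, a priori not all of it.

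A genuinely new input therefore seems required: a sharp obstruction forcing the minimum number of line bundles needed to classically generate a proper $d$-dimensional scheme to be at least $d+1$, in analogy with Beilinson's resolution on $\mathbb{P}^d$. Candidates include a Rouquier-type generation-time bound applied to a Koszul-style resolution of a skyscraper sheaf $k(x)$ at a smooth $d$-dimensional point, or a Hochschild-homology dimension count in the spirit of Bondal-Van den Bergh, where $\sum_{i,j}\dim_k\mathrm{Ext}^j(L_a,L_b)$ controls the Hochschild invariants and cannot exhaust $\bigoplus_i H^i(X,\Omega_X^i)$ with only $d$ line bundles. The hardest step is making this intuition rigorous for an arbitrary proper scheme $X$, without assuming smoothness, projectivity, or that the $L_i$ form an exceptional collection; this is where I expect the main difficulty of the conjecture to lie.
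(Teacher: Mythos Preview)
The statement you are addressing is labelled \emph{Conjecture}~\ref{conj:properness_line_bundle} in the paper, and the paper does \emph{not} prove it. The only case the paper establishes is $d=1$, which follows from Proposition~\ref{prop:generation} (equivalently Corollary~\ref{cor:generation_for_line_bundles}); this matches your base case. For $d\ge 2$ the paper offers no argument.

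Your proposal is not a proof, and you correctly identify why. The inductive/restriction strategy you sketch fails for exactly the reasons you isolate: restricting $d$ line bundles to a $(d-1)$-dimensional hyperplane section still leaves $d$ line bundles, not $d-1$, so the inductive hypothesis is never triggered; and on the curve at the bottom of a flag, $d\ge 2$ line bundles can genuinely generate (your $\mathbb{P}^1$ example is precisely the obstruction, and the paper itself records in Remark~\ref{rem:properness_counter_example} that $d+1$ line bundles can generate, citing \cite{Or2}). Likewise, pullback along $i_C^*$ does not propagate classical generation in the direction you need. So the induction collapses at both ends.

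Your suggested ``new inputs'' (Rouquier generation time, Hochschild dimension counts) are reasonable heuristics, but neither is known to give the sharp bound $d+1$ for arbitrary proper schemes, and both typically require smoothness or some finiteness that fails in the stated generality. In short: your assessment is accurate, the paper's own position is that this remains open beyond $d=1$, and there is nothing in the paper to compare your approach against.
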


\begin{rem}\label{rem:properness_counter_example}
Conjecture~\ref{conj:properness_line_bundle} is false for projective schemes if we allow \(d+1\) line bundles.  
Indeed, if \(L\) is a very ample line bundle, then \(\ang{\sO_X, L, L^{\otimes 2}, \ldots, L^{\otimes d}} = \perf(X)\) by \cite[Theorem 4]{Or2}.
\end{rem}

By Proposition~\ref{prop:generation}, Conjecture~\ref{conj:properness_line_bundle} holds when \(\dim X = 1\), i.e., for proper curves.

\begin{conj}\label{conj:properness_objects}
Let \(X\) be a proper scheme of dimension \(d\) over a field \(k\), and let \(F_1,\ldots,F_d \in \perf(X)\) be objects satisfying
\[
\mathrm{Ext}^p(F_i, F_i) = 
\begin{cases}
0 & (p<0) \\
k & (p=0)
\end{cases}
\quad \text{for all } i.
\]
Then, \(\ang{F_1,\ldots,F_d} \ne \perf(X)\).
\end{conj}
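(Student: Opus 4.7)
The plan is to proceed by induction on $d$, using Proposition~\ref{prop:generation} as both the template and the base of the induction.

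For the base case $d=1$, let $C$ be a proper curve over $k$ and $F \in \perf(C)$ an object satisfying $\mathrm{Ext}^{<0}(F,F) = 0$ and $\mathrm{Ext}^0(F,F) = k$. The plan is to show that $\ang{F}$ admits no non-trivial thick subcategories, generalising \cite[Proposition~2.13]{HKO} and the proof of Lemma~\ref{lem:generation_for_curve}: the condition $\mathrm{Ext}^0(F,F) = k$ together with the vanishing of negative $\mathrm{Ext}$ ensures that $F$ is indecomposable in the triangulated sense, while Serre duality on $C$ gives control of the positive $\mathrm{Ext}$ sufficient to extend the HKO argument. Combined with the existence of non-trivial thick subcategories of $\perf(C)$, such as $\ang{k(x)}$ for any closed point $x \in C$, this would yield $\ang{F} \ne \perf(C)$.

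For the inductive step, assume the statement holds in dimensions strictly less than $d$, and suppose $\ang{F_1,\dots,F_d} = \perf(X)$ for contradiction. After base change to $\overline{k}$ and restriction to an irreducible component, one may assume $X$ is integral over an algebraically closed field. Two natural routes present themselves. The first mimics Proposition~\ref{prop:generation}: choose a non-constant morphism $g:C\to X$ from a proper curve and use $\ang{g^*F_1,\dots,g^*F_d} = \perf(C)$ to invoke the curve case. The second is to remove one generator by passing to the Verdier quotient $\Idem(\perf(X)/\ang{F_d})$, which by Theorem~\ref{thm:foundamel_results_for_Matsui_spectrum}~(2) corresponds to an open subspace of $\tSpec\perf(X)$, in the hope that it admits a geometric realisation of dimension $<d$, so that the inductive hypothesis applies to the images of $F_1,\dots,F_{d-1}$.

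The main obstacle is that the simplicity condition $\mathrm{Ext}^{<0}(F_i,F_i) = 0$, $\mathrm{Ext}^0(F_i,F_i) = k$ is fragile: it is not in general preserved under pullback to a curve (which can introduce negative $\mathrm{Ext}$ classes and destroy simplicity), nor under Verdier quotients by one of the generators (which can alter the endomorphism algebras of the remaining $F_i$). A successful proof will likely require either a refined notion of simple generator that is stable under one of these operations, or a wholly different argument --- for instance, a quantitative study of $\tSpec\perf(X)$ showing that any collection of $d$ simple objects necessarily leaves a non-empty portion of the Matsui spectrum uncovered. Even the base case $d=1$ is substantial, as it already requires extending \cite[Proposition~2.13]{HKO} beyond the exceptional or spherical setting that is the classical domain of that result.
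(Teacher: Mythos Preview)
The statement you are attempting to prove is recorded in the paper as a \emph{conjecture}, not a theorem; the paper supplies no proof. It only asserts, without details, that the conjecture holds for proper Gorenstein curves (the case $d=1$ with $X$ Gorenstein), and records the single example $\ang{L, k(x)} \ne \perf(X)$ for $d\ge 2$, which follows from Proposition~\ref{prop:generation}. There is thus no proof in the paper against which to compare your proposal.

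Your proposal is, as you yourself acknowledge, a strategy with unresolved obstructions rather than a proof. For the base case $d=1$ your sketch is consistent with what makes the smooth Gorenstein curve case work: on a smooth proper curve the hypotheses force $F$ to be a shift of an indecomposable sheaf, whence $\mathrm{Ext}^i(F,F)=0$ for $i\ne 0,1$, and then \cite[Proposition~2.13]{HKO} together with the argument of Lemma~\ref{lem:generation_for_curve} apply verbatim. For $d\ge 2$, however, you correctly identify that neither of your two routes---pulling back to a curve or passing to the Verdier quotient by one generator---preserves the simplicity hypothesis $\mathrm{Ext}^{<0}(F_i,F_i)=0$, $\mathrm{Ext}^0(F_i,F_i)=k$, and you offer no repair. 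The conjecture is left open in the paper, and your proposal does not close it.
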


For example, if \(d \ge 2\), \(L\) is a line bundle on \(X\), and \(x \in X\) is a regular closed point, then Proposition~\ref{prop:generation} implies that \(\ang{L, k(x)} \ne \perf(X)\).  
Conjecture~\ref{conj:properness_objects} holds for proper Gorenstein curves.

Conjecture~\ref{conj:properness_objects} implies Conjecture~\ref{conj:properness_line_bundle} for proper geometrically integral schemes.

\begin{conj}\label{conj:Krull_dimension}
Let \(X, Y\) be separated schemes of finite type over a field \(k\), and assume \(\perf(X) \cong \perf(Y)\).  
Then, \(\dim X = \dim Y\).
\end{conj}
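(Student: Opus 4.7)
My plan is to recover $\dim X$ from $\perf(X)$ by attaching a numerical invariant categorically to the residue fields of smooth closed points. Since $\dim X = \dim X^{\mathrm{sm}}$ for any separated scheme of finite type over $k$, and since at a smooth closed point $x$ one has $\mathrm{Ext}^*_X(k(x),k(x)) \cong \Lambda^\bullet T_xX$, the quantity
\[
d(X) := \sup_{x\in X^{\mathrm{sm}}\text{ closed}} \max\{\,i \mid \mathrm{Ext}^i_X(k(x),k(x))\neq 0\,\}
\]
coincides with $\dim X$. The goal is to reformulate $d(X)$ so that it depends only on the $k$-linear triangulated structure of $\perf(X)$.

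Through the Matsui spectrum this becomes natural. By Theorem \ref{thm:foundamel_results_for_Matsui_spectrum}, the assumed equivalence induces a locally ringed space isomorphism $\tSpec\perf(X)\cong\tSpec\perf(Y)$ under which both $X$ and $Y$ embed as open subschemes. Proposition \ref{prop:small_prime_object_defines_closed_point} together with Lemma \ref{lem:uniqueness_of_point_object} lets me identify residue fields at smooth closed points with the prime objects $F\in\Idem(\perf(X))$ whose self-Ext algebra is an exterior algebra $\Lambda^\bullet V$ on a finite-dimensional $k$-vector space $V$, and for which the local ring at the associated closed point of $\tSpec\perf(X)$ is regular. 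Both conditions are plainly invariant under $k$-linear triangulated equivalence, so $\sup\{\dim_k V\}$ taken over this categorical class is an invariant of $\perf(X)$, equal to $d(X)$ by the discussion above.

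The main obstacle is ensuring that this supremum, computed purely from the triangulated category, is actually attained by an object coming from a closed point of $X$ rather than from some parasitic prime object supported elsewhere in the Matsui spectrum. The isomorphism $\tSpec\perf(X)\cong\tSpec\perf(Y)$ matches up the sets of such prime objects globally, but it need not identify which ones come from $X$ versus $Y$; the sup computed on the $Y$-side could a priori be realised by a residue field at a closed point of a third Fourier-Mukai partner embedded in the common Matsui spectrum, and guaranteeing that all such partners share one dimension is essentially the content of the conjecture. A plausible route is to combine the argument with Kawamata's theorem on proper partners and a compactification step: embed $X$ and $Y$ into proper schemes, apply the proper case there, and then argue that removing a derived-compatible closed subscheme cannot decrease the top Ext degree of a generic residue field. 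Making this precise in the absence of smoothness, and controlling the extra closed points of the Matsui spectrum that compactification may introduce, is where I expect the genuine difficulty to lie.
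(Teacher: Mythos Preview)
The statement you are attempting to prove is stated in the paper as a \emph{conjecture}; the paper does not claim a proof. The only remarks the paper makes are that the smooth projective case is Kawamata's theorem, that the same Serre-functor argument extends to proper Gorenstein schemes, and that Conjecture~\ref{conj:properness_objects} would imply the projective case via Remark~\ref{rem:properness_counter_example}. So there is no ``paper's own proof'' to compare against; any genuine proof would be new.

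Your proposal correctly isolates the obstruction, and that obstruction is fatal to the argument as it stands. The invariant you write down --- the supremum of $\dim_k V$ over prime objects $F$ with $\mathrm{Ext}^*(F,F)\cong\Lambda^\bullet V$ and ``regular'' associated point in $\tSpec\perf(X)$ --- is indeed categorical, but you have not shown it equals $\dim X$. For the upper bound you would need every such $F$ to satisfy $\dim_k V\le\dim X$, yet the paper itself exhibits (for abelian varieties, via \cite{JO}) many prime objects with exterior self-Ext that are not residue fields of $X$; the only reason dimension is preserved there is that all Fourier--Mukai partners happen to be abelian varieties of the same dimension, which is proved by other means. Worse, points of $\tSpec\perf(X)$ outside the Fourier--Mukai locus are essentially uncontrolled (this is the theme of Section~6), so nothing prevents a prime object there from having an exterior self-Ext of the wrong size. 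Your appeal to Lemma~\ref{lem:uniqueness_of_point_object} does not help: that lemma only says that \emph{once} you know a prime object defines $P_x$ for a regular point $x\in X$, it must be a shift of $k(x)$; it does not tell you which prime thick subcategories arise from points of $X$.

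The compactification route you sketch does not close the gap either: a derived equivalence $\perf(X)\cong\perf(Y)$ between open varieties has no reason to extend to one between chosen compactifications, so you cannot directly invoke Kawamata on $\overline{X}$ and $\overline{Y}$. This is exactly why the paper leaves the general statement as a conjecture and instead proposes Conjecture~\ref{conj:properness_objects} (a generation bound) as a possible route in the projective case.
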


Conjecture~\ref{conj:Krull_dimension} is known to hold for smooth projective varieties, as shown in \cite[Theorem 2.3]{Ka}.  
A similar proof extends to proper Gorenstein schemes, where one can define a Serre functor using the canonical bundle.

Conjecture~\ref{conj:properness_objects} implies Conjecture~\ref{conj:Krull_dimension} for projective schemes by Remark~\ref{rem:properness_counter_example}.

\section{Proof of Theorems \ref{thm:main_proper_case} and \ref{thm:main_outside_the_Fourier_Mukai_loci}}

\begin{lem}\label{lem:increasing_sequence}
Let \(X\) be a proper scheme over a field \(k\) of dimension \(\ge 2\) and \(X_0\) the set of closed points of \(X\).  
Let \(\Lambda\) be a well-ordered set of cardinality \(\max\{\#k,\aleph_0\}\), equipped with a bijection \(\Lambda \to X_0\), \(\lambda \mapsto x_\lambda\).  
For each \(\lambda \in \Lambda\), define \(Z_\lambda = \{x_{\lambda'} \in X_0 \mid \lambda' < \lambda\}\), \(I_Z = \ang{\sO_X, \perf_Z(X)}\), and \(I_\lambda = I_{Z_\lambda}\).
Then the following sequence is strictly increasing in \(\tSpec \perf(X)\):
\[
\Supp(I_0) \subsetneq \Supp(I_1) \subsetneq \cdots \subsetneq \Supp(I_\lambda) \subsetneq \Supp(I_{\lambda+1}) \subsetneq \cdots.
\]
\end{lem}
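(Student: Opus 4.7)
The plan is to reduce the desired chain of strict inclusions of supports to the corresponding chain of strict inclusions of thick subcategories, i.e., $I_\lambda \subsetneq I_{\lambda+1}$ for every $\lambda \in \Lambda$.  By Theorem~\ref{thm:foundamel_results_for_Matsui_spectrum}~(1), the support map on thick subcategories is order-preserving and injective, so each strict inclusion of thick subcategories lifts to a strict inclusion of the corresponding closed subsets of $\tSpec\perf(X)$; transitivity then promotes strictness at successor stages to strictness across any pair $\lambda < \mu$ in the well-ordering of $\Lambda$.

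To produce a witness object in $I_{\lambda+1} \setminus I_\lambda$, I would invoke \cite[Lemma~3.4]{Th} (the input already used in the proof of Proposition~\ref{prop:object_properness}) to pick a perfect complex $F_\lambda \in \perf(X)$ with $\Supp_X(F_\lambda) = \{x_\lambda\}$.  Such an $F_\lambda$ lies in $\perf_{\{x_\lambda\}}(X) \subset I_{\lambda+1}$ automatically, so the task reduces to showing $F_\lambda \notin I_\lambda$.  Since classical thick generation is finitary, and any closed subset of the noetherian space $X$ consisting only of closed points is finite, we have $I_\lambda = \bigcup_{S} \langle \sO_X, \perf_S(X)\rangle$ as $S$ ranges over finite subsets of $Z_\lambda$.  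It therefore suffices, for each finite $S \subset Z_\lambda$ (automatically with $x_\lambda \notin S$), to verify $F_\lambda \notin \langle \sO_X, \perf_S(X)\rangle$.

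The core geometric step is to find a proper curve $C$ (over some base field) together with a non-constant morphism $f \colon C \to X$ such that $x_\lambda \in f(C)$ and $f(C) \cap S = \emptyset$; equivalently, $f$ factors through the open subscheme $U := X \setminus S$.  Granted this, restriction along $U \hookrightarrow X$ sends $\sO_X$ to $\sO_U$ and annihilates $\perf_S(X)$, so the image of $\langle \sO_X, \perf_S(X)\rangle$ in $\perf(U)$ lies inside $\langle \sO_U \rangle$.  In particular, if $F_\lambda \in \langle \sO_X, \perf_S(X)\rangle$, then $F_\lambda|_U \in \langle \sO_U \rangle$, and Proposition~\ref{prop:generation} applied to $f \colon C \to U$ forces $f^{-1}(\Supp_U(F_\lambda|_U)) \in \{C, \emptyset\}$.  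Since $\Supp_U(F_\lambda|_U) = \{x_\lambda\}$ (because $x_\lambda \notin S$) and $x_\lambda \in f(C)$ by construction, this gives $f^{-1}(\{x_\lambda\}) = C$, contradicting the non-constancy of $f$.

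The hard part is the curve-existence itself: for a proper (possibly non-projective) scheme $X$ of dimension $\ge 2$, a prescribed closed point $x_\lambda$, and a finite set $S$ of closed points disjoint from $x_\lambda$, I need a proper curve through $x_\lambda$ missing $S$.  Since $S$ has codimension at least $2$, my plan is to adapt the transcendence-basis argument sketched in the example at the end of Section~4 (which constructs proper curves avoiding codimension-$\ge 2$ subsets), additionally arranging specialization to $x_\lambda$; as a fallback, one may reduce to the projective setting via Chow's lemma and cut down by iterated Bertini-type hyperplane sections through a preimage of $x_\lambda$ avoiding $\pi^{-1}(S)$ (Bertini-Poonen handling the finite-field case).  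Proposition~\ref{prop:generation} is formulated for curves proper over an arbitrary base field, so this is compatible with the rest of the argument, and once the curve is produced the remainder is a direct application of Proposition~\ref{prop:generation}.
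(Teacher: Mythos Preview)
Your proposal follows essentially the same strategy as the paper's proof: reduce to \(I_\lambda \subsetneq I_{\lambda+1}\) via the injectivity of \(\Supp\), take a perfect complex supported exactly at \(\{x_\lambda\}\) as witness, pass to finite \(S \subset Z_\lambda\) by finitary generation, and then use a proper curve through \(x_\lambda\) avoiding \(S\) together with Proposition~\ref{prop:generation}. The only differences are cosmetic: you first restrict to \(U = X \setminus S\) before invoking Proposition~\ref{prop:generation} (the paper cites it directly, implicitly using that \(\perf_S(X)\) pulls back to zero along \(C\)), and you flag the curve-existence as a step to be justified and sketch Chow/Bertini, whereas the paper asserts it in one line.
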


\begin{proof}
By Theorem~\ref{thm:foundamel_results_for_Matsui_spectrum}~(1), it suffices to prove that \(I_\lambda \subsetneq I_{\lambda+1}\) for all \(\lambda \in \Lambda\).
Fix \(\lambda \in \Lambda\), and choose \(F \in \perf(X)\) such that \(\Supp_X(F) = \{x_\lambda\}\).  
Then clearly \(F \in I_{\lambda+1}\).  
To show \(F \notin I_\lambda\), observe that \(I_\lambda\) is the filtered colimit of the form
\[
I_\lambda = \mathop{\mo{colim}}_{Z \subset Z_\lambda: \text{ finite}} I_Z,
\]
which is a set-theoretic union on objects.
It thus suffices to show \(F \notin I_Z\) for every finite \(Z \subset Z_\lambda\).
Since \(X\) is of dimension \(\ge 2\), there exists a closed curve \(C \subset X\) such that \(C\) contains \(x_\lambda\) and \(C \cap Z = \emptyset\).
Hence, by Proposition~\ref{prop:generation}, \(F \notin I_Z = \ang{\sO_X, \perf_Z(X)}\).
This proves \(F \notin I_\lambda\), and thus \(I_\lambda \subsetneq I_{\lambda+1}\).
\end{proof}

\begin{proof}[Proof of Theorem \ref{thm:main_proper_case}]
First, assume \(d \ge 2\).  
Since \(X \subset \Supp(\sO_X) = \Supp(I_0)\), the assertion immediately follows from Lemma~\ref{lem:increasing_sequence}.
Now assume \(d = 1\).  
In this case, \(X\) is a projective scheme over \(k\).  
There exists a very ample line bundle \(L\) on \(X\).  
By \cite[Theorem 4]{Or2}, for any two distinct integers \(n,m\), we have \(\ang{L^{\otimes n}, L^{\otimes m}} = \perf(X)\).
On the other hand, by Corollary~\ref{cor:generation_for_line_bundles}, we have \(\ang{L^{\otimes n}}, \ang{L^{\otimes m}} \ne \perf(X)\).  
Therefore, \(\ang{L^{\otimes n}} \ne \ang{L^{\otimes m}}\).  
Since \(X \subset \Supp(L^{\otimes n})\) for any \(n\), and by Theorem~\ref{thm:foundamel_results_for_Matsui_spectrum}~(1), this implies that there exist at least countably infinitely many closed subsets of \(\tSpec \perf(X)\) containing \(X\), which proves the assertion.
\end{proof}

\begin{proof}[Proof of Theorem \ref{thm:main_outside_the_Fourier_Mukai_loci} (1)]
This follows directly from \cite[Corollary 4.14]{It} and Theorem~\ref{thm:main_proper_case}.
\end{proof}

\begin{proof}[Alternative proof of Theorem \ref{thm:main_outside_the_Fourier_Mukai_loci} (1) in the case \(d = 1\)]
By Proposition~\ref{prop:generation}, we have \(X \subset \Supp(\sO_X)\), but \(\Supp(\sO_X) \subsetneq \tSpec \perf(X)\).  
This implies that there exists a point \(x \in \tSpec \perf(X) \setminus X\).  
By the proof of \cite[Proposition 5.3]{HO}, no power of the Serre functor \(S_X\) fixes \(x\).  
Therefore, the sequence \(x, S_X(x), S_X^2(x), \ldots\) consists of mutually distinct points in \(\tSpec \perf(X)\).
\end{proof}

\begin{proof}[Proof of Theorem \ref{thm:main_outside_the_Fourier_Mukai_loci} (2), (4)]
The proof is similar to that of Theorem \ref{thm:main_proper_case}.  
We need to show that the Fourier-Mukai locus is contained in the support of any line bundle \(L\).
Since Fourier-Mukai pairs are stable under base change, we may assume \(k\) is algebraically closed.
First, consider case (2).  
Let \(\eta \in X\) be the generic point.  
Since \(\eta \in \Supp(L)\), we have \(\overline{\{\eta\}} \subset \Supp(L)\).  
By \cite[Lemma 5.24]{It}, it follows that \(\tSpec^{\FM}\perf(X) \subset \overline{\{\eta\}}\).
Next, consider case (4).  
By \cite[Lemma 5.6]{HO2}, we have \(\tSpec^{\FM}\perf(X) \subset \Supp(L)\).
\end{proof}

\begin{proof}[Proof of Theorem \ref{thm:main_outside_the_Fourier_Mukai_loci} (3)]
Let \(S, B\) be abelian varieties such that \(S\) is simple and \(\dim S \ge 2\), and let \(f : X \to S \times B\) be an isogeny.  
Take an affine open neighborhood \(U \subset B\) of the base point \(o_B\), and set \(Z = S \times (B \setminus U)\). Let \(S_0\) denote the set of closed points of \(S\), and let \(\Lambda\) be a well-ordered set equipped with a bijection \(\Lambda \to S_0\), \(\lambda \mapsto s_\lambda\).
Define
\[
I_\lambda = \ang{\sO_X, \perf_{f^{-1}(Z)}(X), \perf_{f^{-1}(Z_\lambda \times B)}(X)}, \quad \text{where } Z_\lambda = \{ s_{\lambda'} \mid \lambda' < \lambda \}.
\]
We first claim that \(I_\lambda \subsetneq I_{\lambda+1}\) for all \(\lambda \in \Lambda\).
Fix a homomorphism \(g: S \to X\) of abelian varieties such that the composition \(p_S \circ f \circ g : S \to S\)
is an isogeny and \(p_B \circ g \circ f : X \to B\) is zero.  
Since
\[
g^* I_\lambda = \ang{\sO_S, \perf_{(f\circ g)^{-1}(Z_\lambda \times B)}(S)},
\]
the claim follows from Lemma \ref{lem:increasing_sequence} applied to \(S\).

Next, we claim that \(\Supp(I_1)\) contains the Fourier-Mukai locus.  
Let \(Y\) be a Fourier-Mukai partner of \(X\) with a derived equivalence \(\phi : \perf(Y) \to \perf(X)\).
We may assume \(k\) is an algebraically closed field after base change. 
We identify \(Y\) with an open subscheme of \(\tSpec \perf(X)\) via the morphism of Theorem \ref{thm:foundamel_results_for_Matsui_spectrum} (5).  
Since \(\Supp(I_1)\) is closed, it suffices to show that \(y \in \Supp(I_1)\) for all closed points \(y \in Y\).
Put \(E = \phi(k(y))\).  
The prime thick subcategory \(P_y \subset \perf(X)\) corresponding to \(y\) is given by
\[
P_y = \{ F \in \perf(X) \mid \Hom^*(F,E) = 0 \}.
\]
Thus, \(y \in \Supp(I_1)\) if and only if \(\Hom^*(F, E) \neq 0\) for at least one of \(F = \sO_X\), \(F \in \perf_{f^{-1}(Z)}(X)\), or \(F \in \perf_{f^{-1}(\{x_0\} \times B)}(X)\).
The object \(E\) is a point object in the sense of \cite{JO}.  
By \cite{JO}, there exists a abelian subvariety \(X' \subset X\) and a closed point \(x \in X\) such that \(\Supp_X(E) = x + X'\).
If the restriction of \(p_B \circ f\) to \(X'\) is nonzero, then
\(\Supp_X(E) \cap f^{-1}(Z) \neq \emptyset\), and hence there exists \(F \in \perf_{f^{-1}(Z)}(X)\) with \(\Hom^*(F,E) \neq 0\).
If the restriction of \(p_S \circ f\) to \(X'\) is nonzero, since \(S\) is simple, this restriction is surjective.
Therefore, \(\Supp_X(E) \cap f^{-1}(\{x_0\} \times B) \neq \emptyset\), and \(\Hom^*(F,E) \neq 0\) for some \(F \in \perf_{f^{-1}(\{x_0\} \times B)}(X)\).
If both restrictions vanish, then since \(f\) is an isogeny, \(X'\) is trivial and \(\Supp_X(E) = \{x\}\) is a singleton.  
In this case, \(\Hom^*(\sO_X, E) \neq 0\).
\end{proof}

\section*{Acknowledgement}

I am grateful to Seidai Yasuda, Takumi Asano, and Kango Ito for their kind support in the course of writing this paper and for their help with proofreading.

This work was supported by JST SPRING, Grant Number JPMJSP2119.

\bibliographystyle{amsalpha}
\bibliography{biblatex.bib}
\end{document}